
\documentclass[12pt]{amsart}
\usepackage{amsmath}
\usepackage{amssymb}
\usepackage{amsfonts}
\usepackage{mathrsfs}
\usepackage{graphicx}
\usepackage{epsfig}
\usepackage[T1]{fontenc}
\numberwithin{equation}{section}       

\theoremstyle{plain}
\newtheorem{theorem}{Theorem}[section]

\newtheorem{prop}{Proposition}[section]

\newtheorem{coro}[prop]{Corollary}
\newtheorem{lemma}[prop]{Lemma}

\newtheorem{definition}[prop]{Definition}

\newtheorem{remark}[prop]{Remark}
\theoremstyle{remark}
\newtheorem{exam}[prop]{Example}

\newtheoremstyle{citing}
  {3pt}
  {3pt}
  {\itshape}
  {}
  {\bfseries}
  {.}
  {.5em}
  {\thmnote{#3}}

\theoremstyle{citing}


 \textwidth=120mm

\DeclareMathAlphabet{\mathpzc}{OT1}{pzc}{m}{it} 

%
%

\newcommand{\C}{\mathbb{C}}
\newcommand{\D}{\mathbb{D}}

\newcommand{\N}{\mathbb{N}}

\newcommand{\R}{\mathbb{R}}

\newcommand{\Z}{\mathbb{Z}}

%
%






\newcommand{\teta}{\widetilde{\teta}}

\newcommand{\eps}{\varepsilon}

%
%



\newcommand{\dist}{d}

\DeclareMathOperator{\id}{Id}
\DeclareMathOperator{\diam}{diam}






%
%




%


\begin{document}

\title[]{Polynomial-like dynamics of analytic maps}

\author{Genadi Levin}

\address{Institute of Mathematics, The Hebrew University of Jerusalem, Givat Ram,
Jerusalem, 91904, Israel}

\email{levin@math.huji.ac.il}

\date{\today}

\maketitle

\begin{abstract}

The theory of polynomial-like maps is of fundamental importance in holomorphic dynamics.
We study dynamical properties of a larger class of maps.
Our main result is that, under some natural assumptions, a map of this class has a completely invariant compact set
if and only if this set is the filled Julia set of a polynomial-like restriction of the map.
We also generalize this result to include maps with non-connected domains of definition.

\end{abstract}

\section{Introduction and main results}


Polynomial-like maps were introduced by Douady and Hubbard \cite{DH} to explain, in particular, the fractal nature of non-linear phenomena in holomorphic dynamics.
In this paper proper holomorphic map which are not polynomial-like are considered and we are interested in conditions that guarantee a polynomial-like (PL) restriction of such a map. We prove in Theorem \ref{thm-m} that, in the class of BI maps, see Definition \ref{d-bi}, a PL restriction exists if and only if the map has a completely invariant compact set $K$ (under some natural restrictions).
This gives a rather complete description of the dynamics of BI maps on such a compact and its neighborhood.
It would be interesting to study dynamical properties of locally analytic self-maps of a compact set of degree at least two for other classes of maps.

Let
\begin{equation}\label{d-proper}
g:U_{-1}\to U_0
\end{equation}
be a proper holomorphic map
of some degree $d\ge 2$, where $U_{-1}, U_0$ are Jordan domains.
We study such a map under an extra assumption
that it is backward invariant, i.e., all further (following the first one $U_{-1}=g^{-1}(U_0)$) pullbacks $g^{-i}(U_0)$, $i=2,3,...$, of $U_0$ also exist and share the same critical set $C_g$ (BI map, see Definition \ref{d-bi}).

If $\overline{U_{-1}}\subset U_0$, the map (\ref{d-proper}) is polynomial-like and is always BI because all pullbacks exist
(see Section \ref{ss-ex-1} and Example \ref{ex-1} there for details).
On the other hand, there are two interesting classes of BI maps which are not, in general, PL maps, see Example \ref{ex-2-0} for the first
class and
Examples \ref{ex-3-minus}-\ref{ex-3} for the second one.
If, e.g., in (\ref{d-proper}), $U_{-1}\subset U_0$, but $\overline{U_{-1}}\cap\partial U_0\neq\emptyset$, and the postcritical set is a subset of $U_{-1}$, the map is
BI but not PL.
On the other hand, another class of BI maps appears naturally in connection with the renormalization.
Namely, a BI map can be associated to any simple renormalization of a quadratic polynomial, see Example \ref{ex-3-minus}.
If, moreover, a quadratic polynomial is infinitely renormalizable and robust but with no unbranched complex bounds, in the sense of McMullen \cite{mcm}, a rescaling subsequence of BI maps along renormalizations converges to a limit BI map,
see Example \ref{ex-3} and Section \ref{s-examples}.
In fact, the latter examples
inspired the present work.

The set $C_g:=\{z\in U_{-1}: g'(z)=0\}$
of critical points of the map (\ref{d-proper}) is not empty
as $U_{-1}, U_0$ are simply connected and the degree $d\ge 2$.
Let
$$P_g=\overline{\cup_{n>0} g^n(C_g)}$$
be the postcritical set of this map whenever it is well-defined.
\begin{definition}\label{d-bi}
The map (\ref{d-proper}) is called backward invariant (BI map) if
there exist a sequence
$$U_0, U_{-1},...,U_{-n},...$$
of Jordan domains and a holomorphic extension
$g:\cup_{i=1}^\infty U_{-i}\to \cup_{i=0}^\infty U_{-i}$ of $g: U_{-1}\to U_0$ such that
the following hold, for every $i=0,1,...$:
\begin{itemize}
\item $g(U_{-i-1})=U_{-i}$ and $g:U_{-i-1}\to U_{-i}$ is a proper map of degree $d$,
\item the critical set
$C_g\subset U_{-i-1}$
and the postcritical set $P_g\subset U_{-i}$.
Moreover, we assume that if $P_g$ is a singleton $\{c\}$ then $g^{-1}(c)\neq \{c\}$.

\end{itemize}
\end{definition}
Notes: (1) it follows that all maps $g: U_{-i-1}\to U_{-i}$ share the same set $C_g$ of critical points,
(2) the second condition implies that, for every $n>0$ and $i\ge 0$, $g^n(C_g)\subset g^n(U_{-i-n})=U_{-i}$, in particular,
the postcritical set $P_g$ is well-defined.

In the sequel, slightly abusing notation,
\begin{equation}\label{eq-bi}
g: U'\to U, \mbox{ where } U'=\cup_{i=1}^\infty U_{-i}, U=\cup_{i=0}^\infty U_{-i},
\end{equation}
always refers to the BI map (unless stated otherwise).

We call a BI map $g: U'\to U$ {\it trivial} if there is an attracting fixed point $a\in \cap_{i\ge 0} U_{-i}$ such that
$g^n(z)\to a$ for all $z\in U$.




For every $i\ge 0$ and $n\ge 1$, the map $g^n: U_{-i-n}\to U_{-i}$ is a proper map of degree $d^n$.
By $g_i^{-n}: U_{-i}\to U_{-i-n}$ we denote the inverse (multi-valued) map.
E.g., $g_0^{-n}(U_0)=U_{-n}$.

\begin{remark}\label{r-nocover}
We do not assume that $g: U'\to U$ is a proper map (of degree $d$).
But if it is so (as in Example \ref{ex-2-0}), then writing $g^{-n}(x)$ (i.e. dropping $i$ in $g_i^{-n}(x)$) is not ambiguous and this would allow us to prove a bit more, see Remark \ref{r-degree} and the Complement to Corollary \ref{c-uniq}. However, there is no reason to believe that
$g:U'\to U$ is a proper map in Examples \ref{ex-3-minus}-\ref{ex-3}.
\end{remark}

The following maps (i)-(iii) which are associated to the BI map (\ref{eq-bi}) are BI by themselves: (i)
the restricted map $g: \cup_{n=1}^\infty g_0^{-n}(\Omega)\to \cup_{n=0}^\infty g_0^{-n}(\Omega)$,
for every Jordan domain $\Omega$ such that
$P_g\subset\Omega\subset U_0$, (ii)
the shifted map $g: \cup_{i=i_0+1}^\infty U_{-i}\to \cup_{i=i_0}^\infty U_{-i}$, for every $i_0>0$,
and (iii)
the iterated map $g^n: \cup_{i=1}^\infty U_{-ni}\to \cup_{i=0}^\infty U_{-ni}$, for every $n>1$.
All these maps share the same  postcritical set $P_g$.



\subsection{Polynomial-like maps}\label{ss-ex-1}

\

Recall that a {\it polynomial-like (PL)} map is a triple $(V', V, g)$ where $V', V$ are topological disks in the plane
such that
$\overline{V'}\subset V$ and $g: V'\to V$ is a proper holomorphic map of some degree $d\ge 2$ \cite{DH}.
The non-escaping set $K_g:=\{z: g^n(z)\in U_{-1}, n=0,1,2,...\}$
is called the {\it filled Julia set} of the PL map $g$.

It is easy to see that $K_g$ is a {\it compact} set, it is {\it full},
i.e., its complement $\C\setminus K_g$ is connected,
and is {\it completely invariant}, i.e., $g^{-1}(K_g)=K_g$.

The compact $K_g$ is connected if and only if $C_g\subset K_g$ where $C_g=\{z\in V': g'(z)=0\}$. Equivalently, $P_g\subset V$.

By the Straightening Theorem \cite{DH}, a PL map $g: V'\to V$ is hybrid equivalent (i.e.,
conjugate in some neighborhood of $K_g$ by a quasiconformal homeomorphism which is conformal a.e. on $K_g$) to
a polynomial of degree $d$. Since the polynomial dynamics is well-understood, see e.g., \cite{CG}, this gives a complete description of the dynamics on $K_g$. In particular, the set of repelling periodic points of the map $g: K_g\to K_g$ is dense in $\partial K_g$, for every $n\in\N$, the map $g^n:K_g\to K_g$ has $d^n$ fixed points (counting multiplicity), every component of the interior of $K_g$ is (pre)periodic \cite{Su}, etc. Moreover, the set of limit points of $\{g^{-n}(x)\}_{n\ge 0}$ coincide with $\partial K_g$, for all $x\in K_g$ with the only exception if $g^{-1}(x)=\{x\}$ (in the latter case, $C_g=\{x\}$ and is completely invariant).

A nice feature of a PL map $g: V'\to V$ is its stability: a small perturbation $(\tilde V, \tilde g)$ of the pair $(V, g)$ is again a PL map $\tilde g: \tilde g^{-1}(\tilde V)\to \tilde V$.

Every PL map is, essentially, a BI map:
\begin{exam}\label{ex-1}
Given a PL map $g: V'\to V$ with $P_g\subset V'$,
choose any Jordan domain $U_0\subset V$
which is slightly smaller than $V$ (if $V$ is Jordan itself, one can take $U_0=V$). Then $U_{-i}:=g^{-i}(U_0)$, $i=0,1,2,...$, is a well-defined (decreasing in this case) sequence of Jordan domains such that $P_g\subset U_{-i}$ and $g: U_{-i-1}\to U_{-i}$
is proper degree $d$, for each $i$. Assume additionally that $C_g$ is not a completely invariant singleton
(equivalently, $g: V'\to V$ is not hybrid equivalent to the power map $z\mapsto z^d$).
Then $g:\cup_{i\ge 1}U_{-i}\to \cup_{i\ge 0}U_{-i}$ is a BI map.
\end{exam}

\

See Section \ref{ss-exa} for examples of BI maps which are not polynomial-like.

\

Example \ref{ex-1} makes natural the following
\begin{definition}\label{d-bi-pl}
We say that a BI map $g: U'\to U$ has a polynomial-like restriction (around $P_g$) if
there exists a topological disk $V\subset U_0$ as follows: if $V':=g_0^{-1}(V)$ then
$P_g\subset V'\subset\overline{V'}\subset V$
and
$g:V'\to V$ is a polynomial-like map of degree $d$.
\end{definition}
In this case, the filled Julia set $K$ of the PL map $g:V'\to V$ is a full completely invariant compact containing $P_g$.

\subsection{Main result}
Example \ref{ex-1} provides us a necessary condition for a BI map to have a PL restriction: the existence of a full completely invariant compact set that contains all critical points. We prove that this necessary condition is also sufficient:

\begin{theorem}\label{thm-m}
Suppose that $g: U_{-1}\to U_0$ is a proper holomorphic map of some degree $d\ge 2$
which extends to a BI map $g: U_{-i-1}\to U_{-i}$, $i=1,2,...$.
Let $K\subset \cap_{i\ge 0} U_{-i}$ be a compact set as follows:
\begin{enumerate}
\item [(t1)] $K$ is completely invariant,
\item [(t2)] $K$ is a full compact and $P_g\subset K$.
\end{enumerate}
Then $g$ has a polynomial-like restriction
and $K$ is its filled Julia set. In particular, $K$ is a continuum,
\end{theorem}
\begin{remark}\label{r-mcmdeep}
The first result of this kind seems to appear in \cite[Ch.5.5]{mcm} where a proper analytic map $g: U\to V$ of degree $d\ge 2$ between disks is considered such that the postcritical set $P_g\subset U$.
Assume that $g:U\to V$ has no an attracting fixed point.
There is a constant $M_d>0$ such that if $P_g$ lies in the bounded component of $\C\setminus A$ for some annulus $A\subset U$ of modulus $\mod(A)\ge M_d$, then
a polynomial-like restriction around the postcritical set $P_g$ exists.
For the proof, the author shows that if the space between $P_g$ and $\partial V$ goes to infinity, then $g$ tends to a polynomial of degree $d$. See \cite{mcm} for details. In particular, a completely invariant full compact ($=$filled Julia set of this PL restriction) always exists in this situation of big space around $P_g$. However, such a compact does not exit in general (e.g. in the set up of Example \ref{ex-2-0} below). Theorem \ref{thm-m} gives conditions for the existence of a PL restriction without the "big space" assumption.

See also Remark \ref{r-folk} below.
\end{remark}
\begin{remark}
In \cite{cs},
the situation is considered of an analytic map which leaves a compact set forward
invariant (rather than completely invariant), and a "pruned
polynomial-like" extension of the map is constructed. The authors apply this to study the analytic structure of conjugacy classes in spaces
of real analytic maps of an interval.
\end{remark}
In \cite{L-lf} we use Theorem \ref{thm-m} to study holomorphic/meromorphic invariant line fields of BI maps,
in connection with Example \ref{ex-3} and the main result of \cite{mcm}.
For another application of methods and results of the present paper, see \cite{L-cb}.


Here, we derive the following corollary and its complement.
Given a BI map $g: U'\to U$ we define, for each $n\in\N$, the set
\begin{equation}\label{eq-Rn}
R_n=\{z| z\in U_{-n}\cap U_0, g^n(z)=z, |(g^n)'(z)|>1\}.
\end{equation}
Note that $R_n$ is finite in any compact subset of $U_0$, by the Uniqueness theorem.
Let
\begin{equation}\label{eq-R}
R'=\{z| \exists z_i\to z, z_i\in R_{n_i}, n_i\to\infty\}.
\end{equation}

\begin{coro}\label{c-uniq}
For a non-trivial BI map $g: \cup_{i\ge 1}U_{-i}\to \cup_{i\ge 0}U_{-i}$, the following alternative holds:

(a) $g$ does not have a completely invariant full compact $K$ such that
$P_g\subset K\subset \cap_{i\ge 0}U_{-i}$;
then the set $R'$ contains a (non trivial) continuum which has common points with $U_0$ as well as with its boundary $\partial U_0$.

(b) $g$ has a completely invariant full compact $K$ such that
$P_g\subset K\subset \cap_{i\ge 0}U_{-i}$;
then there exists a neighborhood $W$ of $K$, such that $g:g_0^{-1}(W)\to W$ is a PL restriction and $K$ its filled Julia set, moreover,
\begin{enumerate}
\item $R'\cap K=\partial K$,
\item $R_n\cap (W\setminus K)=\emptyset$, for every $n\in\N$,
\item $R'\setminus K\subset\partial U_0$.
\end{enumerate}

\end{coro}

\begin{remark}\label{r-degree}
Assume that a BI map $g: U'\to U$ is a degree $d$ proper map. Then $\cup_{n\ge 1}R_n\subset R'$,
in particular, in case (b), $g$ has no repelling periodic orbits outside of $K$.
Indeed, $g: U_{-i-1}\to U_{-i}$ has degree $d$ and if also $g: \cup_{i\ge 1}U_{-i}\to \cup_{i\ge 0}U_{-i}$ has degree $d$ then $g^{-1}(U_{-i})=U_{-i-1}$. This implies that $g_0^{-n}(x)=g^{-n}(x)$ for all $x\in U_0$ and all $n\in\N$. If now $z\in U_{-n}\cap U_0$ is such that $g^n(z)=z$, then $g^{ni}(z)=z$, hence, $z\in U_{-ni}$. Therefore, letting $z=z_i$ and $n_i=ni$ in (\ref{eq-R}) we see that $z\in R'$.
\end{remark}
This remark is relevant for, e.g., a class of BI maps of Example \ref{ex-2-0}. Let us state this explicitly as the following

{\bf Complement to Corollary \ref{c-uniq}.}
{\it Let $V', V$ be simply connected domains in $\C$ such that $V'\subset V$, $V'\neq V$ and
$g: V'\to V$ is a proper holomorphic map of some degree $d\ge 2$
and the postcritical set $P_g\subset V'$. Let $R'$ be the closure of all repelling cycles of $g: V'\to V$.
Then the alternative (a)-(b) holds. Moreover, in case (b) of the alternative, $R'\subset K$, and
for any $x\in V'\setminus K$, either $g^n(x)\in V\setminus V'$ for some $n\ge 0$, or the forward orbit
$O(x)$ of $x$ is well-defined
but
$$\omega(x)\subset\partial V,$$
where $\omega(x)$ is the limit set of $O(x)$.}


Next remark is, essentially, a folklore:
\begin{remark}\label{r-folk}
For a proper map (\ref{d-proper}), conditions (t1)-(t2) alone are not enough for the conclusion of Theorem \ref{thm-m} to hold.
Indeed, let's assume, along with (t1)-(t2), that $g: U_{-1}\to U_0$ has an attracting or parabolic cycle
such that its basin of attraction intersects $U_{-1}\setminus K$. Then it is easy to see that the conclusion of Theorem \ref{thm-m} breaks down.
Nevertheless, if, additionally, to (t1)-(t2),
$K$ is connected and attracting/parabolic cycles as above do not exist, then the conclusion of Theorem \ref{thm-m} does hold
(see \cite{bbb} for details, see also Case (B) in the proof of Proposition \ref{p-bi} below).
The proof of this is heavily based on the fact that $K$ is a connected full compact set. This allows us to conjugate the map
$g: U_{-1}\setminus K\to U_0\setminus K$ (by uniformazing $\C\setminus K$) to a real analytic covering map on an outer neighborhood of the unit circle $S^1$ with no critical points, extend it through $S^1$ by the reflection principle and then apply a non-trivial Mane's theorem
which says that
a $C^2$ covering map of the circle without critical points is expanding away from its attracting and parabolic cycles.


\end{remark}

The proof of Theorem \ref{thm-m} is a mixture of dynamical and geometric arguments.
The main step
constitutes the following
\begin{theorem}\label{thm-mainstep}
Under the conditions of Theorem \ref{thm-m}, the set $R'$ (see \ref{eq-Rn}-\ref{eq-R}) has no points in $U_0\setminus K$.
\end{theorem}
The proof of Theorem \ref{thm-mainstep} in the case of connected $K$ is rather simple because we can still use the conjugation of the map $g: U_{-1}\setminus K\to U_0\setminus K$ to a circle map (although the proof is more elementary than the one outlined in the above Remark \ref{r-folk}  as Mane's theorem is not needed), see Sect. \ref{ss-k-conn}.
Most part of the paper occupies the proof of Theorem \ref{thm-mainstep} in the case of disconnected $K$ along with Appendices A-C
where we collect auxiliary results which are used in the proof.

\subsection{Generalized BI maps}\label{ss-gen}

Note that {\it a priori} the completely invariant compact $K$ of Theorem \ref{thm-m} does not have to be connected, although {\it a posteriori} it 
does.

Let us mention the following generalization of Theorem \ref{thm-m} where $K$ is never connected.
Let, in Definition \ref{d-bi}, $U_0$ be still a Jordan domain while $U_{-1}$ be an open set consisting of finitely many Jordan components $U_{-1}^k$, $k=1,...,m$,
for some $m\ge 2$.
Let $g: U_{-1}\to U_0$ be a proper map of degree $d\ge 2$ meaning that each $g: U_{-1}^k\to U_0$ is a proper map of some degree $d_k\ge 1$ such that $d=\sum_k d_k$.
We say that $g$ extends to a {\it generalized BI map} if
there exist a sequence
$U_{-1},...,U_{-i},...$
of bounded open sets each of which consists of finitely many Jordan components,
and a holomorphic extension
$g:\cup_{i=1}^\infty U_{-i}\to \cup_{i=0}^\infty U_{-i}$ of $g: U_{-1}\to U_0$ such that, for every $i=0,1,...$:
(1) $g(U_{-i-1})=U_{-i}$ and $g: U_{-i-1}\to U_{-i}$
is a proper holomorphic map of degree $d$,
(2) either
the set $C_g$ of the critical points of $g$ is empty, or
$C_g\subset U_{-i-1}$ and
$P_g:=\overline{\cup_{n>0} g^n(C_g)}\subset U_{-i}$,
for all $i\ge 0$.

Recall that a
{\it generalized PL map} \cite{LM} $G: V'\to V$ is a holomorphic map between open sets $V',V$, where $V$ is simply connected, $\overline{V'}\subset V$, and $V'$ consists of finitely many simply connected components $V_k'$, $k=1,...,M$, with pairwise disjoint closures, such that each restriction $G: V_k'\to V$ is a proper map of some degree $D_k\ge 1$.
Its set of non-escaping points (filled Julia set) $K_G=\{z: G^n(z)\in V', n=0,1,...\}$ is a completely invariant full compact.
(Note that if $V'$ is connected, $G: V'\to V$ is the classical (Douady-Hubbard) PL map.)
The Straightening theorem also holds: $G: V'\to V$ is conjugated, in a neighborhood of $K_G$, to a polynomial of degree
$D=\sum_{k=1}^M D_k$ (see e.g. \cite{LS} for a proof of a similar claim).
If $M\ge 2$, then $K_G$ is never connected.

\begin{theorem}\label{thm-gen-bi}
Suppose that $g: \cup_{i\ge 1} U_{-i}\to \cup_{i\ge 0} U_{-i}$ is a generalized BI map.
Let $K\subset\cap_{i\ge 0}U_{-i}$ be a full completely invariant compact set such that
either $g$ has no critical points or $P_g\subset K$.
Then there exist a simply connected domain $V_0$, $\overline{V_0}\subset U_0$, and $m\ge 1$ such that $\overline{V_{-m}}\subset V_0$, where $V_{-m}=g_0^{-m}(V_0)$,
and $K$ is the filled Julia set of the generalized PL map $g^m: U_{-m}\to U_0$ of degree $d^m$.
Furthermore, there exists a neighborhood $V\subset U_0$ of $K$ consisting of finitely many pairwise disjoint with their closures
simply connected domains such that $\overline{g_0^{-1}(V)}\subset V$ and $g: g^{-1}(V)\to V$ is a proper map of degree $d$.

\end{theorem}

\subsection{Description of the paper}
Section \ref{ss-exa} contains examples of BI maps which are not polynomial-like.

In Section \ref{s-bi} some general properties of BI maps are collected. Primarily, we prove in Proposition \ref{p-bi} that any BI map which is not trivial (i.e., not a basin of attraction of an attracting fixed point) either has a lot of repelling cycles close to the boundary and does not admit a PL restriction, or has a PL restriction, with filled Julia set containing almost all repelling cycles.

Theorem \ref{thm-mainstep} is proved in Section \ref{s-mainstep-proof}. The proof in the case of disconnected compact $K$ is based on general facts and their corollaries from Appendices A-C.

Short Sections \ref{s-thm-m}-\ref{s-c-uniq}-\ref{s-thm-gen-bi} contain proofs of Theorem \ref{thm-m} (as a simple consequence of Theorem \ref{thm-mainstep}), Corollary \ref{c-uniq} along with its Complement,
and Theorem \ref{thm-gen-bi}, respectively.

Section \ref{s-examples} contains a detailed exposition of Example \ref{ex-3-minus} and (mainly) Example \ref{ex-3}.

Appendix A. We prove following closely \cite{MP}, \cite{P} that a locally analytic self-map of a compact set $K$ of degree $d\ge 2$
has topological entropy $\ge \log d$. Coupled with the Variational Principle and the formula for the Hausdorff dimension of invariant measures, this implies that the logarithmic capacity of the completely invariant compact set is positive.

Appendix B. Given a (disconnected) compact $X\subset\C$ of positive logarithmic capacity and a universal covering $\pi:\D\to\C\setminus X$,
we recall a classical result about the existence and properties of an infinite Blaschke product $B:\D\to\D$ such that its projection is Green's function of $\D\setminus X$. In Corollary \ref{co-basic} we use Lusin-Privalov's construction to build a nice simply connected domain $W\subset\D$, in particular,
the length of $\partial W\cap\partial\D$ is positive, $\pi(W)$ is contained in a given neighborhood of $X$ and $\pi(w)\to X$ as
$w\to\partial W\cap\partial\D$ in $W$.

Appendix C. We consider a univalent map $f:\D\to\D$ and show, based on an inequality proved in \cite{nehari}, \cite{pomm-vas}, \cite{sol} (see also Lemma \ref{l-extr}) that, if $E, f(E)\subset \partial \D$, then the length of $E$ is small provided $|f'(0)|$ is small
and $f(0)$ is away from the unit circle.

{\bf Acknowledgments.} I would like to thank Feliks Przytycki for several fruitful discussions concerning Theorem \ref{thm-entropy},
and Misha Sodin, Sebastian van Strien and Vladlen Timorin for helpful remarks and references.
I thank also the referee for helpful comments.

\section{Examples of BI maps}\label{ss-exa}

The next example generalises the previous Example \ref{ex-1}. We obtain BI maps by relaxing some conditions in the definition of polynomial-like maps and in the choice of $U_0$.
\begin{exam}\label{ex-2-0}
Let $V', V$ be simply connected (perhaps, unbounded) domains in $\C$ such that $V'\subset V$, $V'\neq V$ and
$g: V'\to V$ is a proper holomorphic map of some degree $d\ge 2$. We don't assume that $V'$  is compactly contained in $V$.
Such a map must have a non-empty set of critical points $C_g=\{x: g'(x)=0\}$.
Assume that the postcritical set $P_g=\cup_{n>0}g^n(C_g)$
is well-defined and is a compact subset of $V'$, and if $P_g$ is a singleton $\{c\}$ then $g^{-1}(c)\neq \{c\}$.
Let $U_0\subset V$ be an arbitrary Jordan domain such that $P_g\subset U_0$. Define
$U_{-i}=g^{-i}(U)$, $n=0,1,2,...$.
Then $g:\cup_{i\ge 1}U_{-i}\to \cup_{i\ge 0}U_{-i}$ is a BI map.




{\it Sub-example:} Let $V', V$ and $U_0$ be as above, moreover, $V', V$ are bounded and $\overline{V'}\subset V$ (i.e., $g: V'\to V$ is PL).
Then it is not difficult to see that the obtained BI map has a PL restriction if and only if $K_g\subset U_0$, where $K_g$ is the filled Julia set
of the PL map $g: V'\to V$
(the PL restriction is then $g: g^{-n}(V')\to g^{-n}(V)$, for some $n>0$ such that $g^{-n}(V)\subset U_0$).

\end{exam}
The last two examples of BI maps are related to renormalizable quadratic polynomials, {\it see Section \ref{s-examples} for details}.
\begin{exam}\label{ex-3-minus}
Let $f(z)=z^2+c$ be a quadratic polynomial. Assume that it admits a simple renormalization of a period $n>1$ (see \cite{mcm} for definitions).
Let $J_n$ be the filled Julia set of a PL restriction of $g^n$ with a single critical point at $0$, and let $P_n$ be the postcritical set of this PL map. Note that $P_n=J_n\cap P_f$ where $P_f$ is the postcritical set of $f:\C\to\C$.
Let $\gamma_n$ be a
Jordan curve in $M:=\hat\C\setminus P_f$ which is homotopic in $M$ to a loop that separates $J_n$ from $P_f\setminus J_n$.
Let $U_0$ be a Jordan domain bounded by $\gamma_n$.
Pulling back $U_0$ along the renormalization as in \cite{mcm}, we obtain a domain $U_{-1}\supset P_n\cup\{0\}$ and degree two proper restriction map
$f^n: U_{-1}\to U_0$ with the critical set $C_g=\{0\}$ and the postcritical set $P_n$. Similarly, we pull back $U_{-1}$ getting $U_{-2}$
such that $f^n: U_{-2}\to U_{-1}$ is degree $2$ proper map and $\{0\}\cup P_n\subset U_{-2}$, and so on.
As a result, we obtain a BI map $f^n: U_{-i-1}\to U_{-i}$, $i=0,1,...$, with the postcritical set $P_n$
\end{exam}


\begin{exam}\label{ex-3}
Let $f(z)=z^2+c$ be infinitely renormalizable. Assume that $f$ is robust and has no unbranched complex bounds
(see \cite{mcm} and Section \ref{s-examples}). Then a limit BI map naturally appears as follows. First, at each period $n$ of simple renormalization we have a BI map obtained as in the previous example, where we start with a geodesic $\gamma_n$ of the Riemann surface $M$ which is homotopic to a loop in $M$ around $J_n$. Then we rescale the sequence of obtained BI maps to the same scale and pass to a limit, see Section \ref{s-examples}.
\end{exam}



\section{On BI maps}\label{s-bi}
Let $g:\cup_{i\ge 0}U_{-i-1}\to\cup_{i\ge 0}U_{-i}$ be a BI map. Recall that $g_i^{-1}$ is the multi-valued inverse to the proper map $g:U_{-i-1}\to U_i$, $i=0,1,...$.
\begin{lemma}\label{l-compl-inv}
Let $Y\subset \cap_{i\ge 0}U_{-i}$ be a compact set. The following conditions are equivalent:
\begin{enumerate}
\item $g^{-1}(Y)=Y$, i.e., $Y$ is completely invariant,
\item $g_i^{-1}(Y)=Y$ for all $i\ge 0$,
\item $g_i^{-1}(Y)=Y$ for some $i\ge 0$ and $g_j^{-1}(x)=g_i^{-1}(x)$ for any $j$ and any $x\in Y$,
\item $g: Y\to Y$ has degree $d$ (counting multiplicities).
\end{enumerate}
Moreover, if any of (1)-(4) holds, then, given $n>0$, for every neighborhood of $W$ of $Y$ there exists another neighborhood $W'$ of $Y$
as follows: for every $j=1,...,n$, two (multi-valued) maps $g_0^{-j}=g_{j-1}^{-1}\circ...\circ g_0^{-1}$ and $(g_0^{-1})^j=g_0^{-1}\circ...\circ g_0^{-1}$ ($j$ times) are well-defined on $W'$,
$g_0^{-j}=(g_0^{-1})^j$ on $W'$ (as multi-valued maps), and
$g_0^{-j}(W')=(g_0^{-1})^j(W')\subset W$.
\end{lemma}
\begin{proof} (1)$\Rightarrow$ (2): given $i$, (1) implies that $g_i^{-1}(Y)\subset Y$ and since $g_i=g$ on $U_{-i}\supset Y$, $g_i(Y)=Y$, hence,
$Y\subset g_i^{-1}(Y)$. Thus $Y\subset g_i^{-1}(Y)\subset Y$, i.e, (2). (2)$\Rightarrow$ (1): obvious.
(2)$\Rightarrow$ (3): given $i$, $g_i^{-1}(Y)=Y$ by (2). Let $j\neq i$. As $g: U_{-k-1}\to U_{-k}$ is a branched $d$-covering map for all $k$, for every $x\in Y\setminus g(C_g)$,
$g_i^{-1}(x)$ consists of $d$ pairwise distinct points which are all in $Y\subset U_{-j}$.
Assume $y\in g_j^{-1}(x)\setminus g^{-1}(x)$, then, using (2), $y\in Y\subset U_{-i}\cap U_j$, hence, $y\in g_i^{-1}(x)$,
a contradiction. Thus $g_i^{-1}(x)=g_j^{-1}(x)$ for all $x\in Y\setminus g(C_g)$, hence, by continuity, for all $x\in Y$
(3)$\Rightarrow$ (2): obvious. Thus, (1)-(3) are equivalent.
(3)$\Rightarrow$ (4): obvious. (4)$\Rightarrow$ (3): as $g=g_i$ on $Y$ and $g_i:U_{-i-1}\to U_{-i}$ has degree $d$, the implication follows.

Let's verify the last claim. Let $W$ be a neighborhood of $Y$. Fix $j$. By item (3), $g_0^{-j}(x_0)=(g_0^{-1})^j(x_0)$ for $x_0\in Y$ and all preimages are also in $Y$,
hence, this holds for all $x$ in a neighborhood $W_{x_0}$ of $x_0$ (with all preimages of $x$ in neighborhoods of corresponding preimages of $x_0$ and in $W$).
By compactness of $Y$, the union of finitely many $W_{x_0}$ form a neighborhood $W'_j$ of $Y$. Then $W'=\cap_{j=0}^{n-1} W'_j$ is as required.

\end{proof}

Next, we are going to describe $3$ alternatives for a BI map.
Let $g: U'\to U$ be a BI map, where $U'=\cup_{i=1}^\infty U_{-i}$, $U=\cup_{i=0}^\infty U_{-i}$.
Let us fix a Jordan curve $\Gamma_0$ in $U_0$ which encloses the postcritical set $P_g$.
Let $V_0$ be the Jordan domain bounded by $\Gamma_0$, so that $\overline{V_0}\subset U_0$.

Recall that $g_i^{-n}:U_{-i}\to U_{-i-n}$ is the inverse (multi-valued) function to $g^n: U_{-i-n}\to U_{-i}$.

Let, for each $n>0$, $V_{-n}=g_0^{-n}(V_0)$ (which is compactly contained in $U_{-n}$) and $\Gamma_{n}=\partial V_{-n}$
(which is also a Jordan curve).

A priori, there are $3$ possibilities:

(A) $\Gamma_n\cap\Gamma_0\neq\emptyset$ for all $n$.

(B) For some $n>0$, $\overline V_{-n}\subset V_0$.

(C) For some $n>0$, $\overline V_0\subset V_{-n}$.


\begin{prop}\label{p-bi}
Assume that (A) holds. Then, for every $\eps>0$ and every $n$ big enough, the map $g^n: U_{-n}\to U_0$ has a repelling fixed point
in $\eps$-neighborhood of $\Gamma_0$.

Assume that (B) holds. Then $g^n: V_{-n}\to V_0$ is a PL map of degree $d^n$. Assume additionally that
there is a full compact $K\subset \cap_{i\ge 0}U_{-i}$
such that $g^{-1}(K)=K$ and $K\subset V_0$.
Then $P_g\subset K$ and there exists a polynomial-like restriction
$g: V'\to V$ where $V$ is a simply connected neighborhood of $K$ and $K$ is the filled Julia set of $g: V'\to V$.

Assume (C) holds.
Then
there is an attracting fixed point
$a\in \cap_{i\ge 0}g^{-i}(V_0)$ such that
$g^n(z)\to a$ for all $z\in \cup_{i\ge 0}g^{-i}(V_0)$.
Moreover, $g$ has no a completely invariant compact.
Finally, if the alternative (C) holds for a sequence of $\Gamma_0$'s that tends to $\partial U$, then the BI map $g: U'\to U$ is trivial: there is an attracting fixed point $a\in \cap_{i\ge 0} U_{-i}$ such that
$g^n(z)\to a$ for all $z\in U$.
\end{prop}

\begin{proof}

{\bf (A)}. In this case, there exist a sequence of indices $n_i\to\infty$ and a sequences of points $x_i\in\Gamma_{n_i}\cap\Gamma_0$ as follows:
$x_i\to b'$, $y_i:=g^{n_i}(x_i)\in \Gamma_0$, $y_i\to b$ and $(g^{n_i})'(x_i)\to L$ where $L\in\C\cup\{\infty\}$.
Fix a small disk $B$ centered at $b$ which is contained in $U_0$ and disjoint with $P_g$, and, for each $i$, let
$\phi_i$ be a branch of $g_0^{-n_i}$ which is defined in $B$ and maps $y_i$ to $x_i$. Then $(\phi_i)'(y_i)\to 1/L$.

Thus $\phi_i:B\to\C$, $i=1,2,...$, is a sequence of univalent maps and is a normal family because it avoids taking values in the set
$P_g\cup g^{-1}(P_g)$, which must contain at least $2$ points by Definition \ref{d-bi}.
By this reason, the case $L=0$ is impossible.

Consider the case $L\neq \infty$, that is, $0<|L|<\infty$.
Replacing $(n_i)$ by a subsequence, in this case $(\phi_i)$ tends uniformly on compacts in $B$ to a non-constant, hence, univalent function $\nu$ which maps $B$ to a neighborhood $B'$ of $b'$. As $\psi_i(B)\subset g_0^{-n_i}(U_0)=U_{-n_i}$, then $B'\subset U_{-n_i}$ for all $i$ big enough.
Shrinking slightly $B'$, we get:

{\it Claim.} The sequence $g^{n_i}: B'\to B$ is well-defined and converges uniformly to $\nu^{-1}$.

Let $m_i=n_{n+1}-n_{n_i}$ Passing again to a subsequence of $(n_i)$,
one can assume that $m_i\to\infty$. Write $g^{n_{i+1}}=g^{m_i}\circ g^{n_i}$, on $B'$. Here,
$g^{n_i}(B')\subset g^{n_i}(U_{-n_{i+1}})\subset U_{-m_i}$.

Shrinking again $B'$ if necessary, by the Claim, $g^{m_i}\to \id$, uniformly on $\nu^{-1}(B')$ where $\nu^{-1}(B')\subset U_{-m_i}$.
Concluding this argument (and shrinking $B$ to a smaller disk around $b$), we obtain that, for some branches $I_i:=g_0^{-m_i}$ on $B$,
$I_i\to \id$ uniformly on $B\subset U_0$.

Let now $D$ be any simply connected subdomain of $U_0\setminus P_g$ containing $B$. As all branches of $g_0^{-m_i}$
are well-defined in $D$ and form a normal family, passing to a subsequence, we get that $I_i$ (more precisely, their extensions from $B$ to $D$) converge uniformly on compacts in $D$ to $\id$. As each $I_i$ is an inverse to $g^{m_i}$, it follows that, given a compactly contained
in $D$ domain $F\subset D$, for all $i$ big enough, we have:
$F\subset U_{-m_i}$ and $g^{m_i}\to \id$ uniformly on $F$.

We apply this to two domains $D_1, D_2$ which together cover the curve $\Gamma_0$, and to their compactly contained subdomains $F_1\subset D_1$, $F_2\subset D_2$, such that $F_1\cup F_2\supset \Gamma_0$. As a result, we obtain that $\Gamma_0\subset U_{-m_i}$ for all $i$ big enough and
$g^{m_i}\to \id$ uniformly on $\Gamma_0$. This is a contradiction by the following reason:
on the one hand, as we have just shown, $\Gamma_0\subset U_{-m_i}$, also $\Gamma_0=\partial V_0$ and $g^{m_i}(U_{-m_i})=U_0$, hence, the sequence $g^{m_i}$ is well-defined on $\overline{V_0}$ and converges uniformly to the identity map $\id$. Then $(g^{m_i})'\to 1$ uniformly on compacts in $V_0$.
On the other hand, $(g^{m_i})'(c)=0$ for each $c\in C_g$
This contradiction excludes the case $0<|L|<\infty$.

Thus we must have that
$(g^{n_i})'(x_i)\to \infty$, i.e., $(\psi_i)'(y_i)\to 0$. Let us show that then $g^{n_i}$ must have a repelling fixed point close to the curve $\Gamma_0$.
Indeed, let $D\subset U_0\setminus P_g$ be a simply connected domain that contains points $b'=\lim x_i$ and $b=\lim y_i$.
Then, by the normality of the family of extensions of $\phi_i$ from $B$ to $D$, a subsequence of $\phi_i$ converges uniformly on compacts
in $D$ to a point $A\in D$. In particular, for a small disk $B_A$ centered at $A$, and for some branch $\phi_i$ of $g^{-n_i}$,
$\overline{\phi_i(B_A)}\subset B_A$. Therefore, $\phi_i$ has an attracting fixed point $a\in B_A$, which is thus a repelling fixed point of $g^{n_i}$.


{\bf (B)}. $\overline V_{-n}\subset V_0$ for some fixed $n>0$. Then $G:=g^n: V_{-n}\to V_0$ is a polynomial-like map (of degree $d^n$).
Let $K_G$ be the filled Julia set of $G$.
Since $g^{-1}(K)=K$, $K\subset V_0$, and using Lemma \ref{l-compl-inv}, $G^{-1}(K)=g_0^{-n}(K)=K$ and
$K\subset \cap_{j\ge 0} G^{-j}(V_0)=K_G$.
As $G: V_{-n}\to V_0$ is conjugate to a polynomial and $\# K\ge 2$, it follows that $K=K_G$. Moreover, as $P_G=P_g\subset V_{-n}$, hence,
$K=K_G$ is connected and $P_g\subset K$.
It remains to show $g$ itself has a PL restriction, i.e., there exist a simply connected neighborhood $V$ of $K$, such that $V':=g^{-1}(V)$ is compactly contained in $V$.
If $n=1$, there is nothing to prove (set $V=V_0$).

So let $n\ge 2$. We present two proofs: the first one uses heavily that $K$ is a continuum while the second one is "continuum free" and can be easily adapted
to the set up of Theorem \ref{thm-gen-bi} where $K$ is disconnected.

{\it First proof.} The following argument is well-known.
Let $\Psi:\C\setminus K\to \D^*:=\{|w|>1\}$
be the conformal isomorphism such that $\Psi(\infty)=\infty$. Below, $\hat A=\Psi(A\setminus K)$ for a neighborhood $A$ of $K$, and $A_*$ is
the union of $\hat A$ with its symmetric w.r.t. $S^1$ completed by $S^1$. Similar notations are used for maps.
Replace $g: U_{-1}\setminus K \to U_0\setminus K$ by
$\hat g:=\Psi\circ g\circ \Psi^{-1}: \hat U_{-1}\to\hat U_0$.
By Schwartz's reflexion principle, $\hat g$ extends to a holomorphic map $g_*: (U_{-1})_*\to (U_0)_*$.
The latter map has no critical points as $g_*^{-1}(S^1)=S^1$.
Let $\hat G=\Psi\circ g\circ \Psi^{-1}: \hat U_{-m}\to\hat U_0$
and $G_*: (U_{-m})_*\to (U_{0})_*$. As this map is a covering and $\overline{(U_{-m})_*}\subset (U_{0})_*$, then
$G_*$ expands the hyperbolic metric $\rho$ of the domain $(U_{0})_*$, i.e, $D_G(w):=\rho\circ G(w) |G'(w)|/\rho(w) >1$ for $w\in (U_{-m})_*$. Now note that
$G_*=g_*^m$ in some neighborhood of $S^1$. Define a new metric $\tilde \rho=\sum_{i=0}^{m-1}\rho\circ g_*^i |(g_*^i)'|$.
Then
$\tilde D_{g_*}(w):=\frac{\tilde\rho\circ g_*(w) |g_*'(w)|}{\tilde\rho(w)}=1+\frac{\rho(w)}{\tilde\rho(w)}\{D_G(w)-1\}>1$
for all $w$ near $S^1$. By the compactness, $\tilde D_{g_*}(w) \ge\lambda$ for some $\lambda>1$
and all $w$ in a small annulus around $S^1$. It follows that there is a neighborhood $V_*$ of $S^1$ which is symmetric
w.r.t. $S^1$ such that
$g_*^{-1}(V_*)$ is compactly contained in $V_*$. Then we let $V=\Psi^{-1}(V_*\cap \D^*)\cup K$.

{\it Second proof.} Here we don't use that $K$ is a continuum.

As $V_{-n}=g_0^{-n}(V_0)$ and $\overline{V_{-n}}\subset V_0$, then
the iterates $(g_0^{-n})^j$ ($j>0$) of $g_0^{-n}$ are all well-defined on $V_0$ and
$\tilde V_j:=(g_0^{-n})^j(V_0)$, $j=0,1,...$, form a decreasing sequence of Jordan domains (with $\tilde V_0=V_0$) such that
\begin{equation}\label{eq-sec-pr}
K=K_G=\cap_{j>0} \tilde V_j.
\end{equation}
Note that $\tilde V_{j+1}=g_0^{-n}(\tilde V_j)$ and $\overline{\tilde V_{j+1}}\subset \tilde V_j$.

For every $j\ge 0$, let
$\Omega_j:=\cup_{i=0}^{n-1}g_0^{-i}(\tilde V_j)$.

Apply the last claim of Lemma \ref{l-compl-inv} to a neighborhood $W=V_0$ of $K$ and find a neighborhood $W'$ of $K$
such that
\begin{equation}\label{eq-sec-pr-2}
g_0^{-i}=(g_0^{-1})^i \mbox{ for }i=0,1,...,n.
\end{equation}

By (\ref{eq-sec-pr}), there exists $j_*>0$ such that, for every $j\ge j_*$,
$K\subset \Omega_{j}\subset W'$
and $\Omega_j$ shrink to $K$ as $j\to\infty$.

This allows us to write, using (\ref{eq-sec-pr-2}), for every $j\ge j_*$:
$$g_0^{-1}(\Omega_j)=g_0^{-1}(\cup_{i=0}^{n-1}(g_0^{-1})^i(\tilde V_j))=\cup_{i=1}^{n}(g_0^{-1})^i(\tilde V_j)=$$
$$=\Omega_j\cup [(g_0^{-1})^{n}(\tilde V_j)\setminus \tilde V_j)]=
\Omega_j\cup [(g_0)^{-n}(\tilde V_j)\setminus \tilde V_j)]=$$
$$=\Omega_j\cup \tilde V_{j+1}\setminus \tilde V_j\subset\Omega_j.$$
Thus $g_0^{-1}(\Omega_j)\subset \Omega_j$
and
$$g: g_0^{-1}(\Omega_j)\setminus K \to \Omega_j\setminus K$$
is an unbranched covering map, for each $j\ge j_*$.

Let
$\Omega^*=\Omega_{j_*}\setminus K$ and let $\rho_*$ be the hyperbolic metric of the domain $\Omega^*$.
As $g_0^{-1}(\Omega^*)\subset \Omega^*$ and there is no an equality, by the above and the Schwartz lemma,
every local branch of $g_0^{-1}: \Omega^*\to\Omega^*$ is a strict contraction.

Now, fix $j$ large enough, such that $\overline{\Omega_j}\subset\Omega_{j_*}$.

Let $Y$ be the topological hull of $\overline{\Omega_j}$.
As $g_0^{-1}(\overline{\Omega_j})\subset \overline{\Omega_j}$ and $g: U_{-1}\to U_0$ is a proper map between simply connected domains,
$g_0^{-1}(Y)\subset Y$. Hence, if $Y^*=Y\setminus K$, then $g_0^{-1}(Y^*)\subset Y^*$ as well (remind that $K$ is completely invariant).

The set $\Delta:=Y\setminus Int(g_0^{-1}(Y))$ is a compact subset of $\Omega^*$, therefore, there exist $r>0$ and $\kappa\in (0,1)$ as follows:
for every $a\in \Delta$, $B_h(a, 2r)$ is a topological disk and every branch $g_0^{-1}: B_h(a, 2r)\to \Omega_*$ contracts the distance in $\Omega^*$ by a factor at leat $\kappa$
(here and below in the proof $B_h(x, t)$ is the disk centered at $x\in \Omega^*$ and of radius $t>0$ in the hyperbolic metric $\rho_*$ of $\Omega^*$).

Note that $\partial Y\subset\partial\Omega_j\subset \Omega^*$ and $\partial Y^*$ is the disjoint union of $\partial Y$ and $\partial K$.
Let $Y^*_r=Y^*\cup_{a\in \partial Y}\bar{B_h}(a, r)$.

Let $\Gamma_*$ be the outer boundary of $Y^*_r$ (i.e, the boundary of the topological hull of $Y^*_r$).
Note that $\Gamma_*$ is a compact subset of $\Omega^*$.
Let $V_*$ be a (simply connected) domain
bounded by $\Gamma_*$. We claim that $g_0^{-1}(V_*)$ is compactly contained in $V_*$. It's enough to show that
$g_0^{-1}(\Gamma_*)\subset V_*$. To this end, let $z\in\Gamma_*$ and $z'\in g_0^{-1}(z)$.
There exist $a\in \partial Y$, such that $\rho_*(z,a)\le r$, and a branch $\psi$ of $g_0^{-1}$ in a neighborhood of $\bar B_h(a,r)$ such that $\psi(z)=z'$.
If $a':=\psi(a)$, then $\rho_*(a',z')\le k\rho_*(a,z)\le kr<r$.
As $a'\in g_0^{-1}(\partial Y)\subset \Delta$,
it follows that
$z'\in Y^*_r$. Thus $V'_*:=g_0^{-1}(V_*)$ is compactly in $V_*$, i.e., $g: V_*'\to V*$ is the required PL map.

{\bf (C)}. If $\overline V_0\subset V_{-n}$ for some $n>0$, then $\overline{g^n(V_0)}\subset V_0$.
Hence, by the Wolff-Denjoy theorem (e.g., \cite{CG}), the sequence of domains $g^{kn}(V_0)$, $k=1,2,...$, shrink to an attracting fixed point $a\in V_0$ of $g^n$. Then $g^i(\overline{V_0})$, $i\ge n$, tend to the cycle $O\ni a$. As $P_g\subset V_0$ is a closed invariant set, then
 $O\subset P_g\subset V_0$.
Therefore, $O=g^i(O)\subset g^i(V_0)$ for all $i\ge n$ where $g^{nk+j}(V_0)$ shrinks to $g^j(a)$ as $k\to\infty$.
Since $P_g\subset g^i(V_0)$ for all $i\ge n$, it follows that $O=\{a\}$.
If a completely invariant compact $K\subset V$ exists, then
$K=\{a\}$ and $\#f^{-1}(a)=a$. In other words $P_g=\{a\}$ and
$a$ is a fixed critical point of order $d$,
which is excluded by Definition \ref{d-bi}.
Finally, if the alternative (C) holds for a sequence of such $\Gamma_0$'s that tends to $\partial U_0$, then the sequence of corresponding
Jordan domains $V_0$'s bounded by $\Gamma_0$'s exhausts $U_0$, hence, by the above,
there is an attracting fixed point
$a\in \cap_{i\ge 0}g^{-i}(U_0)$ such that
$g^n(z)\to a$ for all $z\in U=\cup_{i\ge i} U_0$, i.e., $g: U'\to U$ is trivial.
\end{proof}

\section{Proof of Theorem
\ref{thm-mainstep}}\label{s-mainstep-proof}

Assume the contrary: $g: U'\to U$ has a completely invariant full compact set $K$ such that $P_g\subset K\subset\cap_{n\ge 0}U_{-n}$, and
there exist a point $a\in U_0\setminus K$ and sequences $n_i\to\infty$ and $a_i\to a$ such that
$a_i\in U_{-n_i}$, $g^{n_i}(a_i)=a_i$ and, for each $i$, $|\lambda_i|>1$ where $\lambda_i=(g^{n_i})'(a_i)$.

For every $i$, as $a_i\in U_0\cap U_{-n_i}$ and $|\lambda_i|>1$, there exists a branch $\psi_{n_i}$ of $g_0^{-n_i}$ which is defined (hence, univalent) in some
neighborhood of $a_i$ such that $\psi(a_i)=a_i$.
As $a\notin P_g$, there exists a disk $Z:=B(a, r)\subset U_0$ such that $\overline{Z}\subset U_0\setminus P_g$.
Therefore, each $\psi_i$ extends to a univalent map to $Z$. Repeating arguments as in the proof of the alternative (A), Proposition \ref{p-bi}
(where one can take $V_0$ to be any Jordan domain such that $P_g\subset V_0\subset\overline{V_0}\subset U_0$ and $a\in\Gamma_0:=\partial V_0$) we obtain that $\lambda_i\to\infty$.
Equivalently, $\psi_i'(a_i)=\lambda_i^{-1}\to 0$. In turn, shrinking the radius $r$ of the disk $Z$ if necessary, this implies that $a_i\in\psi_{n_i}(Z)\subset\overline{\psi_{n_i}(Z)}\subset Z$ for all $i$ big enough.

\subsection{The case of connected $K$}\label{ss-k-conn}

Let $\pi: \D^*:=\{|w|>1\}\to\C\setminus K$
be the conformal isomorphism such that $\pi(\infty)=\infty$. Replace $g:U'\setminus K\to U\setminus K$ by
$\hat g:=\pi^{-1}\circ g\circ \pi: \hat U'\to\hat U$ where $\hat U'=\pi^{-1}(U'\setminus K)$,  $\hat U=\pi^{-1}(U\setminus K)$.
By Schwartz's reflection principle,
$\hat g$ extends to a holomorphic map $\hat g_*: \hat U'_*\to\hat U_*$
where $E_*$ denotes the union of a domain $E\subset \D^*$ containing $S^1$ in its boundary with its symmetric w.r.t. $S^1$ completed by $S^1$.

On the other hand, $\hat g_*^{-1}(S^1)=S^1$, hence $\hat g_*$ has no critical points. Therefore, all pullbacks of $\hat g_*^n$ are well-defined in any simply connected subdomain of $(\hat U_0)_*$. Fix such a subdomain $D$ that contains $\hat a$ and an arc $l$ of $S^1$.
As $\hat a_i\to \hat a$ and, for each $i$, $\hat a_i$ is an attracting fixed point of the map
$\hat\psi_i:\hat Z\to\C$ and $\hat\psi_i'(\hat a_i)\to 0$, we arrive at a sequence of holomorphic extensions $(\hat\psi_i)_*$
of $\hat\psi_i$ to $D$ such that
$(\hat\psi_i)_*\to \hat a$ in $D$. But this cannot hold for points of the arc $l$, by the complete invariance of $S^1$, a contradiction

The proof for the disconnected $K$ requires new ideas and is substantially more technical. It also holds in the case of connected $K$.
Let's indicate an idea returning to the connected $K$.
We don't consider the reflection of the map
$\hat g:\hat U'\to\hat U$ across $S^1$. Instead, let us fix a simply connected domain $\Omega$ in $\D^*=\{|z|>1\}$ such that, if $\hat Z=\pi^{-1}(Z)$, then $\hat a\in\hat Z\subset \Omega$ and $\partial \Omega\cap S^1$ contains a nontrivial arc $l$ of $S^1$ (in fact, the set $\partial \Omega\cap S^1$ having a positive length would be enough). Since all branches $\hat g_0^{-n}$ are well-defined
in $\Omega$, for every $i$, the local map $\hat\psi_i:\hat Z\to \hat Z$ extends to a univalent map $\hat\psi_i: \Omega\to\C$
where $\hat\psi_i(\hat a_i)=\hat a_i$ and $\hat\psi_i'(\hat a_i)\to 0$. The point is that this implies that $\hat\psi_i$ expands the length
of the arc $l$ by factors tending to $\infty$, see Appendix C. This is clearly a contradiction since all images of $l$ must lie in $S^1$.

\subsection{Proof of Theorem \ref{thm-m} for disconnected $K$}\label{ss-disc-pr}

By Theorem \ref{thm-entropy} and Corollary \ref{c-cap}, the logarithmic capacity of the completely invariant compact set $K$ is positive
so Corollary \ref{co-basic} applies.

Let us fix an open subset of $U_0$ as follows:

- it consists of a finitely many, but at least two, Jordan domains $Q_1,...,Q_m$, $m\ge 2$, such that $\overline Q_i$, $i=1,...,m$, are pairwise disjoint
and $K\subset \cup_{i=1}^m Q_i$. Moreover, the point $a\notin\cup_{i=1}^m \overline{Q_i}$.

Let $\eps>0$ be the distance between the set $\cup_{i=1}^m \partial Q_i$ and $K$.
For this $\eps$, we find a simply connected domain $W$ as in Corollary \ref{co-basic}, in particular, the connected set
$\pi(W)\subset \C\setminus K$ must lie
in one and only one of the domains $Q_1,...,Q_m$. W.l.o.g. one can assume that
$$\overline{\pi(W)}\subset Q_1.$$

Let us joint the point $a$ and a point $q\in\partial\pi(W)\setminus K$ by a simple arc $\alpha$ which, on the one hand,
lies in the range $U_0$ of $g$, and, on the other hand, except its end point $q$, lies outside
of $\overline{\pi(W)\cup K}$ as well as outside of all others $\overline{Q_i}$, $2\le i\le m$, and also crosses the Jordan curve
$\partial Q_1$ at a unique point. Choose a point $\hat q\in\partial W\cap\D$
such that $\pi(\hat q)=q$ and let $\hat\alpha$ be the lift of the curve $\alpha$ by the covering $\pi$ that begins at $\hat q$. Then
$\tilde\alpha$ is also a simple arc (in $\D$) with the end points $\hat q$ and some $\hat a\in\D$ such that $\pi(\hat a)=a$.
As $\pi$ is a local homeomorphism near $\hat a$, one can choose $r>0$ small enough so that there exists a topological disk $\hat Z$ around $\hat a$
such that $\pi: \hat Z\to Z$ where $Z=B(a,r)$ is a homeomorphism.

Note that $\hat\alpha\cap\overline{W}=\{\hat q\}$. Let us slightly "thicken" the curve $\hat\alpha$ to a narrow "tube" $T$ so that
$\Omega:=\hat Z\cup T\cup W$ is a {\it simply connected domain} in $\D$.
Then
$\pi(\Omega)=Z\cup \pi(T)\cup\pi(W)$ is a domain in $U_0\setminus K$
which is disjoint with $\overline{Q_i}$, for all $2\le i\le m$.
As $\alpha$ crosses $\partial Q_1$ at a single point, we can also assume that $Q:=\pi(\Omega)\cup Q_1=\pi(T)\cup Q_1$ is a simply connected domain,
which is disjoint with all others $Q_2,...,Q_m$.

For each $i$ big enough, we can "lift" the map $\psi_i: Z\to Z$ to a univalent map $\hat\psi_i: \hat Z\to\hat Z$
so that $\pi\circ \hat\psi_i=\psi_i\circ\pi$ on $\hat Z$.

{\bf Claim 0.} {\it $\hat\psi_i: \hat Z\to\hat Z$ extends to a well-defined
holomorphic function (also called $\hat\psi_i$) in the domain $\Omega$.}


Indeed, let $\hat\beta$ be any curve in $\Omega$ that starts at
$\hat a$. Then $\beta:=\pi\circ \hat\beta$ is a curve in $\pi(\Omega)$ that starts at $a$ and lies in $U_0\setminus K$.
As $g^{n_i}: U_{-n_i}\setminus K\to U_0\setminus K$ is an unbranched covering map,
the local branch $\psi_i: Z\to Z$ of $g^{-n_i}$ has an analytic continuation along the curve $\beta$, and it is lifted
by the unbranched covering map $\pi:\D\to\hat\C\setminus K$ to an analytic continuation of $\hat\psi_i$ along the curve $\hat\beta$.
Since the latter is any curve starting at $\hat a$ in the simply connected domain $\Omega$, by the Monodromy theorem,
the function $\hat\psi_i$ is well-defined holomorphic continuation of $\hat\psi_i:\hat Z\to\hat Z$
to the whole domain $\Omega$.

Moreover,

{\bf Claim 1.} {\it For each $i$, the function $\hat\psi_i:\Omega\to\C$ is univalent.}

Indeed, assume the contrary: $\hat\psi_i(x_1)=\hat\psi_i(x_2)$ for some $x_1\neq x_2$ in $\Omega$.
Let $\hat l$ be a simple curve in $\Omega$ joining $x_1$ and $x_2$.
As all branches of $g^{-n_i}$ are inverse branches of a well-defined map
$g^{n_i}:U_{-n_i}\to U_0$
and since
$Q=\pi(\Omega)\cup Q_1=\pi(T)\cup Q_1$ is a simply connected domain,
which is disjoint with some other $Q_j$, $j\neq 1$, the following must hold:
\begin{itemize}
\item $\pi(x_1)=\pi(x_2)$, i.e., $l:=\pi\circ \hat l$ is a closed curve in $\pi(\Omega)\subset Q\setminus K$,
\item
an analytic continuation of some local branch of $g^{-n_i}$ along the closed curve $l$ maps this curve again onto some closed
curve $L$,
\item
there exists a lift of $L$ by $\pi$ to a curve $\hat L$ which begins and ends at the same point $\hat y:=\hat\psi_n(x_1)=\hat\psi_n(x_2)$,
in other words the lift of $L$ is again a closed curve.
\end{itemize}
The latter condition implies that $L$ must be homotopic to a point in the domain $\hat\C\setminus K$.
On the other hand, as the curve $l$ lies in a simply connected domain $Q$ which is disjoint with some $Q_j$,
the curve $L$ must lie in one of the components of $g^{-n_i}(Q)$, which, in turn, are disjoint with all components
of $g^{-n_i}(Q_j)$. As each of the latter components contains points of $K$, the curve $L$ cannot enclose $\infty$ (in $\hat\C$), hence,
$L$ is homotopic to a point in the subdomain $\C\setminus K$ of $\C$.
Since $K$ is completely invariant, then $l$ is homotopic
to a point in $\C\setminus K$ too, hence, its lift $\hat l$ must be a closed curve, a contradiction. This proves Claim 1.

Let $F=\partial W\cap \partial \D$.
Thus, we have:
\begin{enumerate}
\item[($\hat 1$)] for every big $i$, $\hat\psi_i:\Omega\to\D$ is a univalent continuation of
$\hat\psi_i:\hat Z\to\hat\Z$,
\item[($\hat 2$)] $\hat\psi_i(\hat a_i)=\hat a_i$, $\hat a_i\to\hat a\in\Omega$ and $\hat\psi_i'(\hat a_i)=\psi_i'(a_i)\to 0$,
\item[($\hat 3$)] as $|F|>0$ and $K$ is completely invariant, for every big $i$ and almost every $w_0\in F$, $\hat\psi_i(w)$ has a an angular limit
$\hat\psi(w_0)$ as $w\to w_0$ in $\Omega$ and $\hat\psi_i(w_0)\in\partial\D$.
\end{enumerate}

We claim that ($\hat 1$)-($\hat 3$) are incompatible.


Indeed, let $R:\D\to\Omega$ be a Riemann map such that $R(0)=\hat a$. Let $f_i=\hat\psi_i\circ R$, a univalent map in $\D$, for each $i$.
As $\partial\Omega$ is a rectifiable Jordan curve, $R'\in H^1(\D)$.
Hence, $E:=R^{-1}(F)$ is a measurable subset of $S^1$ of a positive 1-dim measure $|E|>0$.

Let $f_i=\hat\psi_i\circ R$, a univalent map of $\D$ onto a bounded domain in $\D$, for each $i$.
Since $\hat\psi_i:\Omega\to\D$ are univalent and form a normal family, by ($\hat 2$),
$f_i(0)=\hat\psi_i(\hat a)\to \hat a$ where $\hat a\in\Omega\subset\D$, and
$f_i'(0)=\hat\psi_i'(\hat a)R'(0)\to 0$.
By ($\hat 3$), $f_i(E)\subset S^1$, for all $i$.
Therefore, one can choose $i_*$ such that
$$|E|\sqrt{\frac{(1-|f_{i_*}(0)|)^3}{(1+|f_{i_*}(0)|)|f_{i_*}'(0)|}}>2\pi,$$
in a contradiction with Corollary \ref{c-bdist} applied to $f=f_{i_*}$.

\section{Proof of Theorem \ref{thm-m}}\label{s-thm-m}
It now follows directly from Theorem \ref{thm-mainstep} along with Proposition \ref{p-bi}, where the alternative (B) must occur.

\section{Proof of Corollary \ref{c-uniq} and its Complement}\label{s-c-uniq}
Proof of (b). By Proposition \ref{p-bi}, for every Jordan curve $\Gamma$ in $U_0$ which encloses $P_g$
either (A) or (C) must hold. If (C) holds for a sequence of such $\Gamma_0$ that tends to $\partial U$, then, by (C), $g: U'\to U$ is trivial,
which is ruled out by a condition.
Therefore, for each Jordan curve $\Gamma$ which surrounds $P_g$ and is close enough to $\partial U_0$ the alternative (A) holds, that is, $\Gamma_0$ must intersect $R'$.
Now, consider a connected component $C$ of $R_g\cup\partial U_0$ which contains $\partial U_0$
It is enough to prove that it also contains points of $U_0$.
Assume the contrary, i.e., $C=\partial U$.
Fix some Jordan curve $\Gamma'$ as above (i.e., $\Gamma'$ surrounds $P_g$ and close to $\partial U_0$)
and let $C'$ be a connected component of $R_g\cup \Gamma'$ that contains $\Gamma'$.
Let $\Omega$ be the unbounded component of $\C\setminus C'$ and $D=(\Omega\cap U_0)\setminus R'$.
By the assumption, $D$ is an open connected set and $\partial U_0$ is a component of the boundary $\partial D$ while $\partial\Omega$ is another component of $\partial D$. Therefore, there exists a curve $\Gamma''$ in $D$ which surrounds $\partial\Omega$, hence, $P_g$ as well, and $\Gamma''$ is close to $\partial U_0$. Then $\Gamma''$ must intersect $R'$, a contradiction because $\Gamma''\subset D$ and $D$ is disjoint with $R'$.

Proof of (a). Item (1): by Theorem \ref{thm-m}, $K$ is the filled Julia set of a PL restriction $g:g_0^{-1}(W)\to W$.
By the Straightening theorem and the Fatou-Julia theory, $\partial K$ is equal to the limit set of repelling periodic points of $g: K\to K$. On the other hand, by Lemma \ref{l-compl-inv} applied to $K$, $R_n\cap K$ coincides with the set $\{z\in K: g^n(z)=z, |(g^n)'(z)|>1\}$. Hence, $R'\cap K=\partial K$, i.e., item (1). Item (2): assume, by a contradiction, that $x\in W\setminus K$ and $g^n(x)=x$ for some $n\in\N$ and $x\in U_{-n}\cap U_0$. On the one hand, $x\in g_0^{-n}(x)$. On the other hand, $\overline{g_0^{-1}(W)}\subset W$. Let $A_k=g_0^{-k}(W)\setminus \overline{g_0^{-k-1}(W)}$, $k=0,1,2,...$. Then $\{A_n\}_{n\ge 0}$ is a partition of $W\setminus K$. If now $x\in A_j$, for some $j\ge 0$, then
$x\in g_0^{-n}(x)\in A_{j+n}$ while $A_{j}\cap A_{j+n}=\emptyset$, a contradiction.
Item (3) follows directly from Theorem \ref{thm-mainstep}.

{\it Proof of the Complement.} After the proven Corollary and Remark \ref{r-degree}, we only need to show the claim about iterates of $x$.
Let $x\in V'\setminus K$ be such that $O(x)$ is well-defined but $\omega(x)\nsubseteq \partial V$.
In other words, there exist a sequence $n_i\to\infty$ a compact $E\subset V$ so that $g^{n_i}(x)\in E$, for all $i$.
Choose a Jordan domain $V_0$ such that $P_g\cup E\subset V_0\subset\overline{V_0}\subset V$.
As $P_g\subset K$, by Theorem \ref{thm-mainstep} along with Proposition \ref{p-bi}, the alternative (B) of Proposition \ref{p-bi} must hold, i.e.,
for some $n\in \N$, $V_{-n}:=g^{-n}(V_0)$ is contained in $V_0$ with its closure. By (B), then $K=K_G:=\cap_{k\ge 0}g^{-kn}(V)$.
On the other hand, write $n_i=l_i n + r_i$, with integers $l_i, r_i$ such that $l_i\to\infty$ and $0\le r_i<n$. Passing to a subsequence, one can assume that $r_i=r$ for all $i$. Then $g^{l_i n}(g^r(x))\in E\subset V_0$, for all $i$, hence,
$g^r(x)\in\cap_{i\ge 0}g^{-l_i n}(V_0)\subset\cap_{k\ge 0}g^{-kn}(V_0)$, where the latter equality follows from the fact that the sequence of domains
$\{g^{-kn}(V_0)\}_{k\ge 0}$ is decreasing. Thus $g^r(x)\in K$ so that $x\in g^{-r}(K)=K$, a contradiction.

\section{Proof of Theorem \ref{thm-gen-bi}}\label{s-thm-gen-bi}
The proof follows almost literally the one of Theorem \ref{thm-m} for disconnected $K$,
where very minor changed are required and only in Proposition \ref{p-bi} and its proof.
Curves $\Gamma_n$ turn into multi-curves. In the alternative (C), $\overline{V_0}$ should be a subset of a component of $V_{-n}$.
In the part of the proof of Proposition \ref{p-bi} where (A) holds, the normality argument in the case $P_g=\emptyset$ can be easily amended;
in the part where (B) holds, $V$ should be a neighborhood of $K$ consisting of a finitely many simply connected
components and the {\it Second proof} of the construction of $V$ such that $g^{-1}(V)$ is compactly in $V$ should be chosen.

\section{BI maps associated to the renormalization}\label{s-examples}
For a renormalizable quadratic polynomial $f(z)=z^2+c$,
let $\mathcal{SR}$ be the consecutive sequence of periods of {\it simple} renormalizations of $f$, see \cite{mcm} for this and related definitions.
Fix some $n\in\mathcal{SR}\setminus \{1\}$. Let $J_n$ be the "little" Julia set (i.e., the filled Julia set of the corresponding PL map around $0$) and $P_n:=P_f\cap J_n$ (where $P=P_f$) be the corresponding "little" postcritical set. Let $\gamma_n$ be the closed geodesic of the hyperbolic Riemann surface $M=\hat{\C}\setminus P_f$
which is isotopic in $M$ to a simple closed curve $\Gamma_n$ separating $J_n$ from $P_f\setminus J_n$, and let $L_n=l(\gamma_n)$ be the hyperbolic length
(denoted by $l$) of $\gamma_n$.
Let us construct inductively a sequence of simple closed curves $\gamma_n^i$ and the corresponding sequence of simply connected domains $V_n^i$ with $\gamma_n^i=\partial V_n^i$, $i=0,1,...$, such that the following hold: $\gamma_n^i$ is isotopic to $\Gamma_n$, separates $P_n$ from all others $f^r(P_n)$, $0<r<n$,
and $f^n: V_n^{i+1}\to V_n^i$ is a proper map of degree $2$ with a single critical point $0$. (One more property appears below.)
.
We begin with $\gamma_n^0=\gamma_n$ and $V_n^0$ to be the Jordan domain bounded by $\gamma_n$. Assuming $\gamma_n^i$, $V_n^i$ have been constructed, we
pull back $V_n^i$ along the orbit $f^n(0)\in V_n^i$, $f^{n-1}(0),..., f(0)$.
For $j=1,...,n-1$, the pullback from $f^n(0)$ to $f^{n-j}(0)$ is disjoint with $P_n$
(in particular, with $0$) because otherwise $V_n^i$ would contain points of $P_n$ and $f^j(P_n)$.
Hence, they are univalent pullbacks. Also, this pullback contains $f^{n-j}(P_n)$
because $\gamma_n^i$ is isotopic to $\Gamma_n$. Then one more pullback (from $f(0)$ to $0$) gives us a domain $V_n^{i+1}$ which contains $P_n$ and is disjoint with all others $f^j(P_n)$, $0<j<n$.
We let $\gamma_n^{i+1}:=\partial V_n^{i+1}$, which is then isotopic to $\Gamma_n$.
The hyperbolic length of $\gamma_n^i$ is then bigger than or equal to $L_n/2^i$ because the map $f^n:\C\setminus P\to \C\setminus f^{-n}(P)$ is a local isometry (see \cite{mcm} for details).

By the Color Theorem (e.g. \cite{mcm}), there is a decreasing function $L\mapsto m(L)\in (0,+\infty)$ and an embedded annulus $C_n\subset M$ of the modulus $\mod(C_n)\ge m(L)$ such that $\gamma_n$ is its core curve. Pulling back $C_n^0:=C_n$ as we did for $V_n^0$, we get the following, for every $i=1,2,...$: (1) an embedded in $M$ annulus
$C_n^i$ with $\mod(C_n^i)\ge m(L_n)/2^i$ such that $\gamma_n^i$ is the core curve of $C_n^i$, and (2) a Jordan domain $\tilde V_n^i=V_n^i\setminus C_n^i$ which still contains $P_n$.

It follows from (1) that $\gamma_n^i$ is a $K=K(L_n/2^i)$-quasicircle, where $K: m\mapsto K(m)\in(1,\infty)$ is a decreasing function.

The infinitely renormalizable $f$ (i.e, assuming $\#\mathcal{SR}=\infty$) is called {\it robust} \cite{mcm} if
$\liminf_{n\in\mathcal{SR}} L_n<\infty$.
The case $\liminf=0$ is reduced to the case when $f$ has so called unbranched complex bounds \cite{mcm} (this means that there exist, along an infinite subsquence of $\mathcal{SR}$, polynomial-like maps with filled Julia sets $J_n$ such that their fundamental annuli are in $M_n$ and moduli are uniformly away from zero).

The other, more interesting for us case (which we assume to hold from now on to the end of this Section) is when there exist $0<L_{-}<L_+<\infty$ such that
$$L_-<L_n<L_+ \mbox{ along an infinite sequence } \mathcal{R}=\{n_i\}\subset\mathcal{SR}.$$
Such maps exist, see \cite{CP}.

Then we construct a limit BI map as follows.
We start by observing that:
\begin{enumerate}
\item $\gamma_n^i$ is a $K_i$-quasicircle, for some $K_i>1$ and all $n\in \mathcal{R}$,
\item $L_{-}/2^i\le l(\gamma_n^i)\le L_+/2^i$, for all $n\in \mathcal{R}$ and all $i$,
\item for $n\in \mathcal{R}$ and $i=0,1,...$, let $B_n^i$ be the maximal ball centered at $0$ and is contained in $V_n^i\cap V_n^{i+1}$.
Using the previous item
and repeating the proof of \cite{mcm}, Theorem 10.12, we obtain that, for each $i$ there exists some $D_i>0$ such that, for all $n$:
$$D_i \diam(V_n^i)\le \diam(B_n^i)\le \diam(V_n^i),$$
$$D_i \diam(V_n^{i+1})\le \diam(B_n^i)\le \diam(V_n^{i+1}),$$
where $\diam(X)$ is the Euclidean diameter of $X\subset\C$,
\item
inequalities of Item (3) imply that, for every $i$, there exists $D_i'>0$, such that, for all $n$:
$$\frac{1}{D_i'}\le \frac{\diam(B_n^i)}{\diam(B_n^0)}\le D_i'$$
\end{enumerate}
Let
\begin{equation}\label{ag}
A_n(z)=\frac{z}{\diam(B_n^0)}, \ \ G_n=A_n\circ f^n\circ A_n^{-1}.
\end{equation}
We note that, by Item (1), for every $i$ and for any Caratheodory limit pointed domain $(W^i, u^i)$ of the sequence of pointed domains $\{(A_n(V_n^i), u_n^i)\}_{n=1}^\infty$,
the curve $\partial W^i$ is a quasicircle and is the Hausdorff limit of the curves $\partial A_n(V_n^i)$.

All this allows us to repeat the proof of \cite{mcm}, Theorem 10.13 and obtain a subsequence $\mathcal{R}_0\subset \mathcal{R}$, such that, along this subsequence:
\begin{enumerate}
\item[(i)] $(A_n(V_n^1), 0) \to (U_{-1}, 0)$ and $(A_n(V_n^0), G_n(0)) \to (U_{0}, v_0)$, where the convergence is in the sense of Caratheodory topology,
\item[(ii)]
$G_n \to g_0$ uniformly on compacts in $U_{-1}$,
where $g_0: U_{-1}\to U_0$ is a holomorphic proper map of degree two, $v_0=g_0(0)$ and $g_0'(0)=0$,
\item[(iv)]
the sequence of compacts $P_n^*:=A_n(P_n)$, $n\in R_0$, converges to a compact $X$
in the {\it Hausdorff distance}.
\end{enumerate}
We claim that, for every $k=1,2,...$,
\begin{equation}\label{gn0g0}
G_n^k(0)\to g_0^k(0)
\end{equation}
as $n\to\infty$ along $n\in \mathcal{R}_0$.
Indeed, as $G_n\to g_0$ uniformly on compacts in $U_{-1}$, it follows from Cauchy's formula that if $z_n\to z$,
where $z_n\in A_n(V_n^1)$ and $z\in U_{-1}$, then $G_n(z_n)\to g_0(z)$. As $G_n(0)\to g_0(0)$, this implies inductively (\ref{gn0g0}).
By (\ref{gn0g0}), $g_0^k(0)\in X$ for all $k$, hence, also $P_g\subset X$. On the other hand, by the considerations about annuli $C_n^i$, for each $n$ and each $i$,
there is an annulus of a modulus $\ge L_{-}/2^{i+1}$ between $\gamma_n^i$ and $P_n$.
This (for $i=0,1$) implies that $X$ is a compact subset of $U_{-1}\cap U_0$.

Now we replace in the above considerations the pair $(V_n^0, V_n^1)$ by $(V_n^1, V_n^2)$ (keeping the maps $A_n$, $G_n$ the same as before
although restricted to the corresponding domains) such that (i)-(iv) hold. We find a subsequence $\mathcal{R}_1\subset \mathcal{R}_0$,
such that, as $n\to\infty$ in $\mathcal{R}_1$, (i)-(iv) hold where $(U_0, U_{-1})$ is replaced by $(U_{-1}, U_{-2})$, and, as before,
$X\subset U_{-2}\cap U_{-1}$. We also have a new proper map of degree two $g_1: U_{-2}\to U_{-1}$ such that $G_n\to g_1$ uniformly on compacts
in $U_{-2}$.

Then we do the same replacing $(V_n^1, V_n^2)$ by $(V_n^2, V_n^3)$ and, passing to a subsequence $\mathcal{R}_2\subset \mathcal{R}_1$, we getting the corresponding limit map $g_2: U_{-3}\to U_{-2}$, $X\subset U_{-3}\cap U_{-2}$, etc., obtaining a sequence of simply connected domains $U_{-i}$ and proper maps $g_i: U_{-i-1}\to U_{-i}$ of degree two, for $i=0,1,...$, such that $X\subset\cap_i U_{-i}$ and
$G_n\to g_i$ uniformly on compacts along the sequence $\mathcal{R}_i$.
Observe that, if $0\le i<j<\infty$ and $x\in U_{-i-1}\cap U_{-j-1}$,
then, along
the sequence $\mathcal{R}_j$, $G_n(x)\to g_i(x)$ and $G_n(x)\to g_j(x)$. Thus $g_i=g_j$ on $U_{-i-1}\cap U_{-j-1}$, so that the map $g:\cup_{i\ge 1} U_{-i}\to \cup_{i\ge 0} U_{-i}$
where $g|_{U_{-i-1}}=g_i$, is well-defined.

Finally, by Cantor's diagonal procedure we arrive at an infinite sequence $\mathcal{R}_*$ such that, as $n\to\infty$ in $\mathcal{R}_*$,
for every $i=0,1,...$, the following claims (i*)-(v*) hold:
\begin{enumerate}
\item[(i*)] $(A_n(V_n^{i+1}), 0) \to (U_{-i-1}, 0)$ and $(A_n(V_n^i), G_n(0)) \to (U_{-i}, v)$, where the convergence is in the sense of Caratheodory topology, as well as in the Hausdorff distance (while passing to the closures).
\item[(ii*)]
$G_n \to g$ uniformly on compacts in $U_{-i-1}$,
where $g: U_{-i-1}\to U_{-i}$ is a holomorphic proper map of degree two, $v=g(0)$ and $g'(0)=0$,
i.e., $C_g=\{0\}$,
\item[(iii*)]
the sequence of compacts $P_n^*:=A_n(P_n)$, $n\in \mathcal{R}_*$, converges to a compact $X$
in the {\it Hausdorff distance} where $X\subset \cap_{i\ge 0} U_{-i}$.
As $0\in P_n$, for all $n$, hence, $0=A_n(0)\in P_n^*$ for all $n$, hence, $0\in X$.
\item[(iv*)] $G_n^k(0)\to g^k(0)$
for every $k=1,2,...$,
Therefore, $P_g:=\overline{\{g^k(0)\}_{k=1}^\infty}\subset X\subset \cap_{i\ge 0} U_{-i}$.
\end{enumerate}
Thus, $g: \cup_{i\ge 1} U_{-i}\to \cup_{i\ge 0} U_{-i}$ is the required limit BI map.

\appendix

\section{Entropy of locally analytic maps}\label{a-entropy}
The following result is contained essentially in \cite{MP}, \cite{P}:
\begin{theorem}\label{thm-entropy}

(1) Let $X\subset\C$ be a compact subset of the plane
and $f:X\to X$ be a continuous map which extends to a holomorphic map $f:W'\to W$ of some neighborhoods $W', W$ of $X$, such that
$f^{-1}(X)=X$ and $\deg f|_X=
N\ge 2$.
Assume that
$X$ is not equal to the union of cycles of critical points of $f$.
Then $h_{top}(f)\ge \log N$, for the topological entropy $h_{top}(f)$ of $f:X\to X$.

(2) If, additionally, $f: W'\to W$ is a proper map of degree $N$ where $W$ is simply connected, then the conclusion holds unless $W'$ is (simply) connected and $X$ is a singleton $\{c\}$ where $c$ is a fixed critical point of $f$ of multiplicity $N$.
\end{theorem}

\begin{proof}
(1) We follow closely \cite{P} (where $X$ is the Julia set of a rational function $f$) and \cite{MP} (where $f$ is a smooth self-map of a compact smooth manifold) taking into account that our map $f$ is defined only in a small neighborhood of $X$.
Shrinking $W$ if necessary one can assume that all critical points of $f:W'\to W$ lie in $X$.
Let $C$, $C_s$ be the sets of non periodic and periodic critical points of $f:X\to X$, respectively. (It is possible though that either $C_s$ or both sets $C, C_s$ are empty.)
$C(\sigma)=\cup_{c\in C}B(c,\sigma)$ be the $\sigma$-neighborhood of $C$ and $C_s(\sigma)$ be a neighborhood of $C_s$
which is forward invariant
($f(C_s(\sigma))\subset C_s(\sigma)$) and is
contained in the $\sigma$-neighborhood of $C_s$
(all distances are Euclidean). Note that $C(\sigma)$ is disjoint with $C_s(\sigma)$ for all $\sigma>0$ small enough.
We prove:

($\star$) {\it given $t\in (0,1)$, there exist $\sigma>0$ small enough and $k_0$ such that for every $k\ge k_0$ and every $y\in X$,}
if $y, f(y), f^2(y),...,f^k(y)\notin C_s(\sigma)$ then
\begin{equation}\label{dens}
\#\{j: 0\le j < k, f^j(y)\in C(\sigma)\}<t k.
\end{equation}

First, if $y$ is attracted by an attracting (not superattracting) or parabolic cycle, such that some $c\in C$ is also in the basin of attraction of the same cycle (the number of such cycles is at most $\# C$), then (\ref{dens}) holds because the left-hand side of (\ref{dens}) is bounded,
uniformly over $y$ provided $\sigma$ is small enough.
Hence, one can assume that

($\star$$\star$) {\it $y$ is not attracted by a cycle that attracts also a point of $C$}.

In this case, since $C, C_s$ are finite sets, (\ref{dens}) would follow if we show that
\begin{equation}\label{pomozh}
\inf\{m: z, f^m(z)\in B(c,\sigma) \mbox{ for some } c\in C, z\in X\}\to\infty
\end{equation}
as $\sigma\to 0$.
To this end, let $r>0$ be such that $X_r:=\cup_{a\in X}B(a,r)\subset W'$ and $M=\sup\{|f'(z)|: z\in X_r\}<\infty$.
Given $c\in C$, there exist $\rho>0$, $L>0$, $l\ge 2$ (the order of $c$) such that $L^{-1} |z-c|^{l-1}<|f'(z)|< L|z-c|^{l-1}$ for all
$z\in B(c, \rho)$. We prove that there exists $L_*$ that depends on $M$, $L$ and $l$ such that $m>L_*/\log(1/\sigma)$ whenever $\sigma<\min\{r/8, \rho\}$ and
$z, f^m(x)\in B(c,\sigma)$ for some $x\in X$.
It is enough to prove that
$$L (5\sigma)^l M^m \ge 2\sigma.$$
If we assume the contrary, then, on the one hand, all images $f^j(B(x, 4\sigma))$, $j=0,1,2,...,m$, are well-defined, i.e.
are contained in $X_r\subset W'$. On the other hand, as $|x-f^m(x)|<2\sigma$, we obtain that
$\overline{f^m(B(x, 4\sigma))}\subset B(x,4\sigma)$ which implies that $f^{mk}$ tends as $k\to\infty$ to an attracting fixed point of $f^m$
uniformly on compacts in $B(x,4\sigma)$. Note that $c\in B(x, 4\sigma)$, therefore, $x$ and $c$ are attracted by an attracting cycle of $f$,
in contradiction with ($\star$$\star$).
This
proves ($\star$).

We proceed by fixing any $\alpha\in (0,1)$. For $t=1-\alpha$, find $\sigma>0$ small enough and $k_0$ such that ($\star$) holds
and such that the set $B(\sigma):=X\setminus C(\sigma)$ is not empty (the latter is possible by the condition on $X$). Then, for every $y\in X$ and every $k>k_0$,
\begin{equation}\label{dens1}
\#\{j: 0\le j < k, f^j(y)\in B(\sigma)\} > \alpha k
\end{equation}
whenever $y,f(y),...,f^k(y)\notin C_s(\sigma)$.
On the other hand, every $x\in B(\sigma)$ has a small neighborhood $U_x$ such that $f:U_x\to\C$ is a homeomorphism.
Let $\delta>0$ be the Lebesgue number of the covering $(U_x)$ of $B(\sigma)$. Then
\begin{equation}\label{sep}
x_1,x_2\in B(\sigma), \
|x_1-x_2|<\delta \Rightarrow f(x_1)\neq f(x_2).
\end{equation}
Now, take any $x\in B(\sigma)\setminus C_s(\sigma)$.
As $C_s(\sigma)$ is forward invariant, then $f^{-k}(x)\cap C_s(\sigma)=\emptyset$ for all $k\ge 0$.
It follows from the condition on $X$ that we can choose $x$ not to be in the forward orbit of any critical point of $f$.

We are in a position to repeat literally the end of the proof of \cite{MP} (where $B=B(\sigma)$).

A set $P\subset X$ is called $\delta$-separated if $|z_1-z_2|>\delta$ for all distinct $z_1,z_2\in P$
and $P$ is $(k,\delta)$ separated if
$\max\{|f^j(z_1)-f^j(z_2)|>\delta, j=0,...,k-1\}$ for all distinct points $z_1,z_2\in P$
(in particular, any singleton is $(k,\delta)$-separated).
Consider the backward orbit $BO(x)=\cup_{k=0}^\infty f^{-k}(x)$ of $x$.
For every $y\in BO(x)$ there exists a $\delta$-separated set $P(y)\subset f^{-1}(y)$ such that either $P(y)=\{z\}$ for some $z\notin B(\sigma)$ or
$P(y)\subset B(\sigma)$ and $\#P(y)=N$. Fix $k>k_0$ and define by induction:
$$Q_0=\{x\}; \ \ Q_{j+1}=\cup_{y\in Q_j} P(y), \ \ j=0,1,...,k-1.$$
Then $Q_k$ is $(k,\delta)$-separated (induction in $j=0,...,k$). Let us show that $\# Q_k\ge N^{\alpha k}$.
For $y\in Q_j$, let $Gen(y,j)=\#\{i: 0\le i<j, f^i(y)\in B(\sigma)\}$. Let $m=[\alpha k]+1$ (where $[T]$ is the entire part of $T>0$).
Then, by (\ref{dens1}), there are $N^m$ pairs $(y,j)$ such that $y\in B(\sigma)\cap Q_j$ and $Gen(y,j)=m$.
For each such a pair there exists $z\in Q_k$ for which $f^{k-j}(z)=y$ and such points are distinct for distinct pairs.
Therefore, $\# Q_k\ge N^m\ge N^{\alpha k}$. We have shown that for every $\alpha\in (0,1)$ there exist $k_0$ and $\delta>0$ such that the maximal cardinality $r_k(f,\delta)$
of a $(k,\delta)$-separated subset of $X$ is bigger than or equal to $N^{\alpha k}$, for all $k>k_0$. Thus
$h_{top}(f)=\lim_{\delta\to 0}\limsup_{k\to\infty}\frac{1}{k}\log r_k(f,\delta)\ge \alpha \log N$, for all $\alpha\in (0,1)$. We are done.

(2) Suppose that $W$ is simply connected and $X\subset W'\cap W$ consists of cycles of critical points of $f$. Then $\#f^{-1}(x)=1$
for all $x\in X$ which, along with the Riemann-Hurwitz formula for the map $f:U\to W$ where $U$ is a component of $W'$ yield that $W'$ consists of a single component and $X$ is just a point
(which is then a fixed critical point of $f: W'\to W$ of the order $d$).

\end{proof}
\begin{remark}\label{r-nodegree}
The conclusion (with the same proof) holds if the condition $\deg f|_X=N$ is replaced by a weaker one: $\#f^{-1}(x)\ge N$ for each $x\in X$ (counting multiplicity).
\end{remark}

\begin{coro}\label{c-cap}
Under the conditions of Theorem \ref{thm-entropy},
the logarithmic capacity of the compact set $X$ is positive.
\end{coro}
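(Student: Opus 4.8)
By Theorem \ref{thm-entropy} the topological entropy of $f|_X$ satisfies $h_{top}(f)\ge \log N\ge \log 2>0$. The plan is to convert this positive entropy into positive Hausdorff dimension of $X$, and then to invoke the classical potential-theoretic fact that a compact subset of the plane of positive Hausdorff dimension is non-polar, i.e. has positive logarithmic capacity. Throughout I use the quantities introduced in the proof of Theorem \ref{thm-entropy}: the neighbourhood $X_r=\cup_{a\in X}B(a,r)\subset W'$ on which $M:=\sup\{|f'(z)|:z\in X_r\}<\infty$, and I note that $f^{-1}(X)=X$ together with $\deg f|_X=N\ge 2$ forces $f(X)=X$, so that $f\colon X\to X$ is a continuous self-map of the compact set $X$.

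First I would produce a good invariant measure. By the variational principle for topological entropy (applied to $f|_X$) together with the ergodic decomposition, there is an ergodic $f$-invariant Borel probability measure $\mu$ supported on $X$ with $0<h_\mu(f)\le h_{top}(f)$. Since $f$ is holomorphic near $X$ it is conformal, and the Lyapunov exponent $\chi_\mu:=\int_X\log|f'|\,d\mu$ is well defined with $\chi_\mu\le \log M<\infty$. Regarding $f$ as a smooth self-map of the (real) surface, Ruelle's inequality gives $h_\mu(f)\le 2\chi_\mu^{+}$, because a holomorphic map has a single complex Lyapunov exponent of real multiplicity two; as $h_\mu(f)>0$ this yields $\chi_\mu\ge\tfrac12 h_\mu(f)>0$. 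Thus the exponent is strictly positive and finite.

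Next I would pass from the measure to its dimension. For an ergodic measure of a conformal map with positive, finite Lyapunov exponent, the lower pointwise dimension satisfies $\underline d_\mu(x)\ge h_\mu(f)/\chi_\mu$ for $\mu$-a.e.\ $x$ (the Volume Lemma, i.e.\ the lower bound in the Ma\~n\'e--Przytycki--Urba\'nski dimension formula). Consequently $\HD(\mu)$, the infimum of the Hausdorff dimensions of sets of full $\mu$-measure, satisfies $\HD(\mu)\ge h_\mu(f)/\chi_\mu>0$, and since $\mu(X)=1$ we obtain $\HD(X)\ge \HD(\mu)>0$. Finally, fixing $0<t<\HD(X)$, Frostman's lemma furnishes a probability measure $\nu$ on $X$ with $\nu(B(x,\rho))\le C\rho^{t}$ for all $x$ and $\rho>0$; such a $\nu$ has finite logarithmic energy $\iint\log\frac{1}{|z-w|}\,d\nu(z)\,d\nu(w)<\infty$ (the kernel is bounded below on the compact set $X$, and the mass bound makes it integrable near the diagonal), whence the logarithmic capacity of $X$ is positive.

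The main obstacle is the dimension step, namely verifying the inequality $\HD(\mu)\ge h_\mu(f)/\chi_\mu$ in the present setting, where $f$ is defined only on a neighbourhood of $X$ and may have critical points inside $X$. The locality is harmless: the underlying Shannon--McMillan--Breiman and bounded-distortion estimates are carried out along $\mu$-typical orbits, which remain in $X\subset X_r$ where $f$ and $\log|f'|$ are controlled. The critical points are handled by the standard observation that an ergodic measure of positive entropy assigns zero mass to the (finite, hence measure-zero) critical set and its grand orbit, so that $\chi_\mu$ is genuinely positive and the conformal distortion argument applies along $\mu$-almost every orbit. Once this local version of the dimension formula is in place, the remaining implications (Ruelle's inequality, Frostman's lemma, energy finiteness) are entirely standard.
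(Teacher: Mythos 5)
Your proof is correct and follows essentially the same route as the paper's: variational principle to get an ergodic measure of positive entropy, positivity of the Lyapunov exponent, the dimension formula $\HD(\mu)\ge h_\mu(f)/\chi_\mu(f)$ to get $\HD(X)>0$, and then positive Hausdorff dimension implies positive logarithmic capacity. You merely make explicit two steps the paper leaves to a citation — Ruelle's inequality for the positivity of $\chi_\mu$, and the Frostman-energy argument in place of the reference to Tsuji — which is a welcome but not substantively different elaboration.
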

\begin{proof} By the Variational Principle, $\log N\le h_{top}(f)=\sup h_\mu(f)$, where supremum is taken over the set $M(f)$ of all
$f$-invariant Borel probability measures on $X$ and $h_\mu(f)$ is the entropy of $f$ w.r.t. $\mu$, see e.g. \cite{PU}. In particular, there exists an ergodic $\mu\in M(f)$ with $h_\mu(f)>0$. This implies that the Lyapunov exponent $\chi_\mu(f):=\int \log|f'| d\mu>0$.
Therefore, $HD(\mu)=h_\mu(f)/\chi_\mu(f)>0$, where $HD(\mu)$ is the infimum of Hausdorff dimensions over all Borel $Y\subset X$ such that
$\mu(Y)=1$. Hence, Hausdorff dimension of $X$ is positive. In turn, this implies (see e.g. \cite{tsuji}) that the logarithmic capacity of $X$ is positive as well.
\end{proof}
\begin{remark}\label{r-manyper}
By \cite{PU}, Theorem 11.6.1, any such ergodic $\mu\in M(f)$ is approximated by measures supported on Cantor expanding repellers of $f$.
In particular, there exist a lot of periodic orbits in any neighborhood of $X$. However, in general they are off $X$.
\end{remark}

\section{Uniformization}\label{a-unif}
\begin{theorem}\label{thm-green} (see e.g. \cite{tsuji})
Let $X$ be a full compact set in $\C$ that contains at least $3$ points.
Let $\pi:\D=\{|w|<1\}\to\hat C\setminus X$ be a universal holomorphic covering map and $\Gamma=\{\gamma\}$ be the corresponding Fuchsian group
of Mobius transformations of $\D$ without elliptic elements such that $\C\setminus X\simeq \D/\Gamma$. Normalize $\pi$ such that $\pi(0)=\infty$.
The following conditions are equivalent:
\begin{enumerate}
\item logarithmic capacity $c(X)$ of $X$ is positive,
\item $\Gamma$ is of convergence type, that is
$$\sum_{\gamma\in\Gamma}(1-|\gamma(w)|)<\infty$$
for some (hence, any) $w\in\D$. In this case, the infinite Blaschke product
$$B(w)=\Pi_{\gamma\in\Gamma}\frac{|\gamma(0)|}{\gamma(0)}\frac{w-\gamma(0)}{1-\overline{\gamma(0)}w}$$
is a holomorphic function in $\D$ such that $|B\circ \gamma(w)|=|B(w)|$ for all $w\in\D$, $\gamma\in\Gamma$,
$|B(w)|<1$ for all $w\in\D$ and, for almost every $w_0\in\partial\D$, the angular limit $\lim |B(w)|=1$ as $w\to w_0$ exists.

Moreover,
projecting the function $-\log|B|$ to $\C\setminus X$ we obtain
Green's function $G_X$ of $\C\setminus X$,
that is,
$$G_X(z)=-\log|B(w)|,$$
for some (hence, every) $w\in\D\setminus \{\gamma(0): \gamma\in\Gamma\}$, such that $z=\pi(w)$. Then $G_X$ is well-defined everywhere positive harmonic function in $\C\setminus X$,
$G(z)-\log |z|\to -\log(c(X))$ as $z\to\infty$ and, for almost every $w_0\in\partial\D$, the following angular limit exists:
\begin{equation}\label{eq-ae}
\lim_{w\to w_0}G_X(\pi(w))=0.
\end{equation}
\end{enumerate}
\end{theorem}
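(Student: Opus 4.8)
The plan is to route the equivalence through two classical facts: the potential-theoretic description of logarithmic capacity, and the convergence/divergence dichotomy for Fuchsian groups. Recall first that for a compact $X\subset\C$ the complement $\Omega:=\hat\C\setminus X$ carries a classical Green's function $G_X$ with pole at $\infty$, normalized by $G_X(z)=\log|z|-\log c(X)+o(1)$ as $z\to\infty$, if and only if $X$ is non-polar, i.e. $c(X)>0$; this is standard Frostman/equilibrium-measure theory. Thus the whole statement reduces to producing this Green's function on the universal cover, precisely under the convergence-type hypothesis. To set this up I would lift: a positive harmonic function on $\Omega$ with a logarithmic pole at $\infty=\pi(0)$ pulls back under $\pi$ to a $\Gamma$-invariant positive harmonic function on $\D\setminus\Gamma\cdot 0$ with logarithmic singularities exactly along the orbit $\Gamma\cdot 0=\{\gamma(0):\gamma\in\Gamma\}$, and conversely every $\Gamma$-invariant such function descends. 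The minimal positive superharmonic function on $\D$ with unit logarithmic poles on this orbit is, by definition, the lift of the Green's function of the surface $\D/\Gamma\simeq\Omega$.

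The crux is the convergence criterion. The natural candidate for the lifted Green's function is $-\log|B|$, with $B$ the infinite Blaschke product over the orbit $\{\gamma(0)\}$. Such a product converges, to a nonzero holomorphic function on $\D$ with $|B|<1$, precisely when the Blaschke condition $\sum_\gamma(1-|\gamma(0)|)<\infty$ holds, which is exactly the convergence type of $\Gamma$; and it is a classical theorem (Myrberg) that $\D/\Gamma$ possesses a Green's function if and only if $\Gamma$ is of convergence type. When the series converges, $-\log|B|$ is positive and harmonic on $\D\setminus\Gamma\cdot 0$ with unit logarithmic poles exactly on the orbit, and it is the minimal such, hence equals the lifted Green's function. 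When the series diverges, no Green's function exists, so $\Omega$ is parabolic and $c(X)=0$. This establishes (1) $\Leftrightarrow$ (2).

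The remaining assertions I would read off the product. The identity $|B\circ\gamma|=|B|$ holds because $\gamma$ permutes the orbit $\{\gamma(0)\}$ while each individual factor is a Möbius automorphism of $\D$, so the modulus is $\Gamma$-invariant and $-\log|B|$ descends to a well-defined positive harmonic $G_X$ on $\Omega$. The asymptotic $G_X(z)-\log|z|\to-\log c(X)$ as $z\to\infty$ is the normalization of Green's function, the limit being the Robin constant, and it can be matched by comparing the expansion of $-\log|B|$ near $w=0$ with the Laurent expansion of $\pi$ at its pole. Finally, since $B$ is a convergent Blaschke product it is an inner function, so by Fatou's theorem its angular limits exist and have modulus $1$ for almost every $w_0\in\partial\D$; translating through $G_X(\pi(w))=-\log|B(w)|$ yields the almost-everywhere angular limit in \eqref{eq-ae}.

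The main obstacle is the convergence-type dichotomy itself, namely the equivalence ``a Green's function exists on $\D/\Gamma$ iff $\Gamma$ is of convergence type'' together with the identification of the minimal positive superharmonic function carrying the prescribed poles with $-\log|B|$. The delicate point is minimality: one must rule out a strictly smaller positive superharmonic majorant, so that $-\log|B|$ genuinely is the Green's function and not merely a dominating function. The almost-everywhere boundary behavior is the second classical input, resting on Fatou's theorem for bounded holomorphic functions. Both are exactly the ingredients furnished by the cited reference, so in the write-up I would isolate them as the two external inputs and verify only the bookkeeping adapting them to the normalization $\pi(0)=\infty$.
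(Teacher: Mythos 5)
The paper gives no proof of this theorem: it is quoted verbatim as a classical result with the reference to Tsuji's book, so there is no internal argument to compare yours against. Your outline faithfully reconstructs the standard proof found in that reference: the reduction of $c(X)>0$ to the existence of a Green's function via Frostman/equilibrium theory, the Myrberg dichotomy identifying existence of a Green's function on $\D/\Gamma$ with convergence type of $\Gamma$, the realization of the lifted Green's function as $-\log|B|$ (where the minimality/identification step you flag is indeed the only delicate point, settled by the formula expressing the quotient Green's function as the orbit sum of disk Green's functions), and the Fatou--Riesz theorem for inner functions giving the almost-everywhere angular boundary values in (\ref{eq-ae}). This is correct and is essentially the intended argument; no discrepancy with the paper arises.
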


\begin{definition}\label{d-triangle}
For an angle $\beta\in (0, \pi/2)$,
$r\in (\sin\beta, 1)$,
and a point $a\in\partial\D$, let $\Delta_a(\beta, r)$ be
a connected component of the set $\{z: r<|z|<1, |\arg(z-a)|<\beta\}$ which contains $a$ in its boundary.
\end{definition}
In other words, $\Delta_a(\beta, r)$ is
an open "triangle" at a vertex $a$ bounded by an arc of the circle $|z|<r$ and segments of two rays through $a$ which form the angle $\beta$
with the radius $[0, a]$ of the unit circle.

Recall that $|F|$ denotes the Lebesgue 1-dim measure (length) of a measurable subset $F\subset\partial\D$

We have:
\begin{coro}\label{co-basic}
Let $X$ be a compact with positive logarithmic capacity.
Then for every $\eps>0$ there exists a domain $W\subset\D$ as follows:
\begin{enumerate}
\item [(1)]$W$ is simply connected,
\item [(2)] $\partial W$ is a rectifiable Jordan curve,
\item [(3)] $|\partial W\cap\partial \D|>0$,
\item [(4)] $\pi(W)$ lies in the $\eps$-neighborhood of $X$,
\item [(5)] $\pi(w)\to X$ uniformly as $w\to \partial\D$ inside of $W$.
\end{enumerate}
\end{coro}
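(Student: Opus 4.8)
The plan is to reduce all six conditions to the single nonnegative function $u:=-\log|B|=G_X\circ\pi$ on $\D$. By Theorem \ref{thm-green}, $u$ is nonnegative, it is harmonic on $\D$ away from the discrete set $\pi^{-1}(\infty)=\{\gamma(0):\gamma\in\Gamma\}$ of zeros of $B$ (where $u=+\infty$), and by \eqref{eq-ae} its angular (nontangential) limit is $0$ at almost every point of $\partial\D$. I would first record one elementary compactness fact: since $G_X$ is continuous and strictly positive on $\C\setminus X$ and $G_X(z)\to\infty$ as $z\to\infty$, for every $\eps>0$ there is $\delta=\delta(\eps)>0$ with $\{z:G_X(z)<\delta\}$ contained in the $\eps$-neighborhood of $X$; indeed, a sequence $z_n$ with $G_X(z_n)\to0$ and $\dist(z_n,X)\ge\eps$ would subconverge (the sublevel sets of $G_X$ being bounded) to a point of $\C\setminus X$ at which $G_X$ vanishes, which is impossible. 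Hence a bound $u<\delta$ on a region of $\D$ forces its $\pi$-image into the $\eps$-neighborhood of $X$, and $u\to0$ forces the image to approach $X$.

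The core is a truncated Privalov--Lusin (sawtooth) domain adapted to $u$. Fix a nontangential cone aperture $\alpha$ and, for $\rho\in(0,1)$, let $M_\rho(\zeta)=\sup\{u(w):w\in\Gamma_\alpha(\zeta),\ |w|\ge\rho\}$ be the truncated nontangential maximal function; by the a.e.\ angular convergence, $M_\rho(\zeta)\to0$ as $\rho\to1$ for a.e.\ $\zeta$. Working inside a fixed short boundary arc and applying Egorov's theorem, I would produce a closed set $F\subset\partial\D$ of positive length on which this convergence is \emph{uniform}, together with a radius $\rho_0$ satisfying $\sup_{\zeta\in F}M_{\rho_0}(\zeta)<\delta(\eps)$. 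Over $F$ the classical construction yields a rectifiable Jordan domain $W\subset\D$, truncated at radius $\rho_0$, with $\partial W\cap\partial\D=F$ and with every point of $W$ lying in a cone $\Gamma_\alpha(\zeta)$ at some $\zeta\in F$ with $|w|>\rho_0$; this gives (1), (2) and (4) directly. Since then $u(w)\le M_{\rho_0}(\zeta)<\delta$ for all $w\in W$, we get $\pi(W)\subset\{G_X<\delta\}$, hence (5). Finally, as $|w|\to1$ in $W$ the bound $u(w)\le\sup_{\zeta\in F}M_{|w|}(\zeta)\to0$ combines with the compactness fact to give the uniform statement (6).

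Condition (3) then follows from the maximum principle. Because $u<\delta<\infty$ throughout $W$, the region contains no zero of $B$, so $u$ is harmonic and nonconstant on the connected domain $W$; it therefore attains its maximum $\mu^*:=\max_{\overline W}u>0$ at a boundary point $w_0$, with $u(w)<\mu^*$ for every interior $w$, which is exactly $|B(w)|>|B(w_0)|$ on $W$. To see that the fiber $\pi^{-1}(\pi(w_0))=\Gamma w_0$ meets $\partial W$ finitely, note that every point of this fiber carries the value $u=\mu^*$, hence lies in $\{u=\mu^*\}\cap\partial W$; by (6) and $\mu^*>0$ there is $\rho_1<1$ with $u<\mu^*$ on $\overline W\cap\{|w|\ge\rho_1\}$, so all maximizers lie in the compact set $\overline W\cap\{|w|\le\rho_1\}\subset\D$, where $\partial W$ consists of finitely many real-analytic arcs on each of which $u$ is real-analytic and nonconstant, forcing $\{u=\mu^*\}\cap\partial W$ to be finite. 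I expect the main obstacle to be precisely this simultaneous bookkeeping: producing a domain that is at once simply connected with rectifiable Jordan boundary, anchored to $\partial\D$ along a set of positive length, and yet thin enough that $u\to0$ uniformly inside it. The Egorov-plus-sawtooth mechanism is what reconciles these competing demands, while the finiteness of the fiber over the maximizing point in (3) is a secondary technical point handled by confining the maximizers to a compact part of $\partial W$ via (6).
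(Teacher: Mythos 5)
Your proposal follows essentially the same route as the paper: the sublevel-set reduction via $G_X$ (the paper's $l_\eps$ and $b=\exp(-l_\eps)$), Egorov applied to the a.e.\ angular convergence to produce a closed set $F\subset\partial\D$ of positive length with uniform control on a truncated cone (the paper's $H_n$), the truncated sawtooth domain over $F$ for (1), (2), (4)--(6), and the minimum principle for $|B|$ (your maximum principle for $u=-\log|B|$) for (3). Two small points: the sawtooth over a closed set need not be connected, so one must pass to a connected component over a closed $F_0\subset F$ still of positive measure, as the paper does explicitly; and your justification of the finiteness in (3) via ``$u$ is real-analytic and nonconstant on each boundary arc'' is the one shaky sub-step, since nonconstancy of $u$ along a boundary segment is not obvious (a level arc of a harmonic function could in principle contain such a segment). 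That gap closes immediately from what you already have: the fiber $\pi^{-1}(\pi(w_0))=\Gamma w_0$ is a discrete closed subset of $\D$, and you have confined its intersection with $\partial W$ to a compact subset of $\D$; the paper argues equivalently that infinitely many fiber points on $\partial W$ would accumulate on $\partial W\cap\partial\D$, forcing $|B|\to 1$ along them while $|B|\equiv|B(w_0)|<1$ on the fiber.
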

\begin{proof} Let $X_\eps=\{z\in\C\setminus X: \dist(z, X)<\eps\}$. Define
$$l_\eps=\inf\{G_X(z): z\in\C\setminus X_\eps\}.$$
Then $l_\eps>0$ and if $G_X(z)<l_\eps$ for some $z\in\C\setminus X$, then $z\in X_\eps$.

Let $b=\exp(-l_\eps)$. Then $|B(w)|>b$ implies $\pi(w)\in X_\eps$.

Now we repeat partly a Lusin-Privalov construction (as in the proof of Privalov's theorem, e.g. \cite{tsuji}).
Fix an angle $\beta\in (0,\pi/2)$ and given $a\in \partial\D$ and $n\in\N$, such that
$1-1/n>\sin\beta$,
define
$$H_n(a)=\inf\{|B(w)|: w\in \Delta_a(\beta,1-1/n)\}.$$
By Theorem \ref{thm-green},
$$\lim_{n\to\infty}H_n(a)=1$$
for a.e. $a\in\partial\D$.
Hence, by Egorov's theorem, there exists a closed
subset $F\subset \partial\D$ of positive measure such that $H_n(a)\to 1$
as $n\to\infty$ uniformly in $a\in F$. In particular, for some $n_0\in\N$,
$$|B(w)|>b$$
whenever
$$w\in W_*=\cup_{a\in F}\Delta_a(\beta, 1-1/n_0).$$
One can assume that $F$ lies in an arc of $\partial \D$ of small enough length and $n_0$ is big enough
so that the part of the boundary
of $W_*$ which lies on the circle $|w|=1-1/n_0$ is a proper part of this circle.
Note that $\partial W_*\cap\partial\D=F$. Hence, there exists a connected component $W$ of the (open) set $W_*$
(note that $W=W_*$ if $W_*$ is connected itself)
and a closed subset $F_0$ of $F$ of positive measure as follows:
$$W=\cup_{a\in F_0}\Delta_a(\beta, 1-1/n_0),$$
$|B(w)|>b$ for every $w\in W$ and $|B(w)|\to 1$ as $w\to F_0$ uniformly in $W$.
Then properties (3)-(5) hold for $W$.
Let's show that (1)-(2) hold too. By the construction, $\partial W\cap \{|w|=1-1/n_0\}$ is a proper arc
$\{(1-1/n_0)\exp{it}: t_-\le t\le t_+\}$ of the circle $|w|=1-1/n_0$. Then the closure of $W$ is a subset of
$R:=\{w=r\exp(it): 1-1/n_0\le r\le 1, t_-\le t\le t_+\}$. Therefore, there exists a neighborhood $R^+$ of $R$ and a diffeomorphic in $R^+$ branch ${\bf Log}$ of the
map $w\mapsto i\overline{\log(w)}$ such that  (slightly abusing notation and replacing $W$ by ${\bf Log}(W)$) one can assume that
$F_0$ is a compact subset of $\R$ and
$$W=\{z=x+iy: 0<y<h, |\arg(z-x_0)|<\pi/2-\beta \mbox{ for some } x_0\in F_0\},$$
where $h=-\log(1-1/n_0)$.
Then it is easy to see that the boundary of $W$ is connected, which implies that $W$ is simply connected.
Now, if $I=[x_1,x_2]\subset \R$ is the smallest closed interval containing $F_0$
and $I\setminus F_0=\cup_{j=1}^\infty I_j$ where $I_j$ are pairwise disjoint open intervals, then the boundary
of $W$ consists of:
the "upper" boundary interval $\{x+ih: x_1-h\tan(\beta)\le x\le x_2+h\tan(\beta)\}$,
"left" and "right" intervals $[x_1-h\tan(\beta)+ih, x_1]$ and $[x_2, x_2+h\tan(\beta)+ih]$,
the compact set $F_0\subset \R$ and a "saw"-like curve consisting of a sequence of two sides of equilateral triangles with bases $I_j$ whose total
length is bounded by $\sum_j |I_j|/\sin(\beta)=(|I|-|F_0|)/\sin(\beta)$. Thus $\partial W$ is a rectifiable curve.
\end{proof}

\section{Boundary distortion of sets under univalent self-maps of a disk}\label{a-pfl}

Let $f:\D\to\D$ be a univalent map of the unit disk into itself. The following are known theorems, see \cite{pomm}.
\begin{itemize}
\item For a.e. $\zeta\in S^1=\partial\D$, $f(z)$ has an angular limit $f(\zeta)$.
\item $f$ has a finite angular derivative at $\zeta\in S^1$ if and only if
the function $f'(z)$ has an angular limit $f'(\zeta)$ at $\zeta$.
\item
If, for some $\zeta\in S^1$, $f(\zeta)\in S^1$, then (the angular derivative) $f'(\zeta)$ exists and $1\le \frac{\zeta}{f(\zeta)}f'(\zeta)\le \infty$
(the Wolff-Denjoy Theorem).
\item
If $E\subset S^1$ is measurable and $f(E)\subset S^1$, then $f'(\zeta)$ is finite for a.e. $\zeta\in E$ (a corollary from
the McMillan Twist Theorem).
\end{itemize}
In the next lemma, the inequality (\ref{eq-b2}) of part 1 is proved in \cite{nehari}\footnote{Although not explicitly stated in \cite{nehari}, the inequality (\ref{eq-b2}) is attained as the limit case, as $z\to 1$, $z\in\R$, in the inequality at the top of
p.259 assuming w.l.o.g. that $\zeta=1$.}, \cite{sol}, \cite{pomm-vas}.
Part 2 is an easy consequence, see below.
\begin{lemma}\label{l-bounds}
Let $f:\D\to\D$ be univalent. For some $\zeta\in S^1$, suppose that the angular limit $f(\zeta)$ exists and
$f(\zeta)\in S^1$.  Assume that the angular derivative $f'(\zeta)$ is finite.
\begin{enumerate}
\item[(b1)] If $f(0)=0$, then
\begin{equation}\label{eq-b2}
|f'(0)|^{-1/2}\le |f'(\zeta)|.
\end{equation}
\item[(b2)] in general,
\begin{equation}\label{eq-b3}
\frac{(1-|f(0)|)^{3/2}}{(1+|f(0)|)^{1/2}}|f'(0)|^{-1/2}\le |f'(\zeta)|.
\end{equation}
\end{enumerate}
\end{lemma}
Proof of (b2):
let $g(z)=\frac{f(z)-A}{1-\bar A f(z)}$ where $A=f(0)$.
As
$g'(z)=\frac{f'(z)(1-|A|^2)}{(1-\bar A f(z))^2}$, then $g'(0)=\frac{f'(0)}{1-|A|^2}$ and
$|g'(\zeta)|=\frac{|f'(\zeta)|(1-|A|^2)}{|1-\bar A f(\zeta)|^2}\le \frac{|f'(\zeta)|(1-|A|^2)}{(1-|A|)^2}=\frac{|f'(\zeta)|(1+|A|)}{(1-|A|)}$.
By (\ref{eq-b2}), this implies (\ref{eq-b3}).

The following lemma and its corollary state that, if $E, f(E)\subset S^1$, then the size of $E$ is small provided $|f'(0)|$ is small
and $f(0)$ is away from the unit circle. Exact bounds are obtained in \cite{sol}, under the assumption that $E$ is contained in an open set $E_0\subset S^1$ such that $f(E_0)\subset S^1$ (and $f(0)=0$).
\begin{lemma}\label{l-bdist}
Let $f:\D\to\D$ be univalent. Let $E\subset S^1$ be a measurable set such that $f(E)\subset S^1$, and $|E|>0$.
Then for every $\epsilon>0$ there exists a closed set $L\subset E$ such that $|L|>|E|-\epsilon$, $f(L)$ is measurable and
\begin{equation}\label{eq-bdist}
|f(L)|\ge |L| \sqrt{\frac{(1-|f(0)|)^3}{(1+|f(0)|)|f'(0)|}}.
\end{equation}
\end{lemma}
\begin{proof}
As $f'(\zeta)$ exists and is finite for a.e. $\zeta\in E$, and $f'(\zeta)=\lim_{n\to\infty}f'((1-1/n)\zeta)$
where $f'$ is continuous in $\D$, then
$f':E\to\C$ is measurable. By Lusin's theorem, there exists a closed subset $E'$ of $E$ such that $|E'|>|E|-\epsilon/2$
and $f': E'\to\C$ is (uniformly) continuous.
Fix an angle $\beta\in(0,\pi/2)$ and a radius $r\in (\sin\beta, 1)$.
Given $X\subset S^1$, let
$$W_X=\cup_{a\in X}\Delta_a(\beta, r)$$
(see Definition \ref{d-triangle})).
For every $n$ big enough, define $M_n: E'\to\R$ as follows:
$$M_n(a)=\sup\{|f'(z)-f'(a)|: z\in \Delta_a(\beta, 1-1/n)\}.$$
Since $f'(z)\to f'(a)$ as $z\to a$ while $|\arg(z-a)|<\beta$,
$M_n(a)\to 0$ for every $a\in E'$. By Egorov's theorem, there exists a closed set $L\subset E'$, $|L| > |E'|-\epsilon/2$,
such that $M_n\to 0$ uniformly on $L$. (Note that $|L|>|E|-\epsilon$.) Consider $f'$ on $W_L$.
We show that $f'(z)\to f'(a)$ as $z\to a$ in $W_L$ uniformly in $a\in L$. Indeed, given a small $t>0$, choose a small $\delta_0>0$ such that $|f'(b)-f'(a)|<t$ whenever $a,b\in L$ and $|b-a|<\delta_0$, and choose $N$ such that $M_N(b)<t/2$ for all $b\in L$.
Now, let
$\delta=\min\{1/N, (\delta_0/2)\cos\beta\}$.
It is chosen so that,
for any $z\in B(1,\delta)\cap\D$, if $z\in\Delta_c(\beta, r)$ for some $c\in S^1$ then
$|c-1|<\delta_0$.


If now $|z-a|<\delta$ for some $a\in L$, $z\in W_L$, then, by the definition of $W_L$,
$z\in\Delta_b(\beta, r)$
for some $b\in L$, on the other hand,
$|b-a|<\delta_0$, by the choice of $\delta$. Therefore,
$$|f'(z)-f'(a)|\le |f'(z)-f'(b)|+|f'(b)-f'(a)|< M_N(b)+t/2<t/2+t/2=t.$$
Thus
$f'(z)\to f'(a)$ as $z\to a$ for $z\in W_L$, uniformly in $a\in L$. Finally, as $f'$ is continuous in $\D$, we conclude that $f'$ is a continuous (in particular, bounded) function in the closure $\overline{W_L}$ of $W_L$.
Let $X=\overline{W_L}\cup \{|z|\le r\}$. Then
$X\subset\D\cup L$
and $\partial X$ is a Jordan curve which consists of $\partial{W_L}$ completed by arcs of the circle $\{|z|=r\}$. Note that
$X\cap S^1=L$.

{\bf Claim.} {\it $f:\partial X\to\C$ is injective.}

Indeed, since $f:\D\to\D$ is univalent and $f(E)\subset S^1$, it remains to show that, if $Z=f(\zeta_1)=f(\zeta_2)$ for some
$\zeta_1,\zeta_2\in L$, then $\zeta_1=\zeta_2$. Assume the contrary. As the angular derivative $f'(\zeta)$ exists and not $0,\infty$ for
every $\zeta\in E$, $f$ is conformal at every "triangle" $\Delta_\zeta(\beta', r')$, for all $\beta'\in(0,\pi/2)$ and all $r'$ small enough:
if $\gamma_1,\gamma_2$ are any two smooth curves in $\Delta_\zeta(\beta',r')\cup\{\zeta\}$ intersecting at the point $\zeta$ at an angle $\alpha$, then
$f(\gamma_1)$, $f(\gamma_2)$ are two smooth curves in $\D\cup f(\zeta)$ that intersects at the same angle $\beta'$.
Fix $\beta'\in (\pi/4,\pi/2)$ and apply this property to $\Delta_{\zeta_i}(\beta',r')$, $i=1,2$ and $r'>0$ small enough.
We obtain two small disjoint (open) triangles $T_1, T_2$ in $\D$ each having an angle $>\pi/2$ at their common vertex $Z=f(\zeta_1)=f(\zeta_2)$,
a contradiction, which proves the claim.


In view of the Claim,
it would be enough to prove (\ref{eq-bdist}) with $L$ replaced by $F\subset L$ such that $W_F$ is a component of the (open) set $W_E$. Fix such an $F$. $W_F$ is a simply connected domain, hence, there exists a conformal isomorphism $R:\D\to W_F$.
As $W_F$ is  bounded by a rectifiable Jordan curve, by Riesz-Privalov's theorem, the derivative
$R'$ belongs in the Hardy space $H^1(\D)$. Hence, $R^{-1}(F)$ is measurable.
Consider a univalent map $g:=f\circ R:\D\to\D$. As $f'$ is bounded in $W_F$,
$g'\in H^1(\D)$ as well and the set $g(R^{-1}(F))$ is measurable.

On the other hand, by the Claim, $g(\D)$ is a Jordan domain. Therefore,
by Riesz-Privalov's theorem, we can write:
$$|f(F)|=|g(R^{-1}(F))|=\int_{R^{-1}(F)}|g'(w)||dw|=\int_{R^{-1}(F)}|f'(R(w))||R'(w)||dw|=$$
$$=\int_F |f'(z)||dz|\ge |F| \sqrt{\frac{(1-|f(0)|)^3}{(1+|f(0)|)|f'(0)|}},$$
where the latter inequality follows from Lemma \ref{l-bounds}.

\end{proof}

\begin{coro}\label{c-bdist}
Let $f:\D\to\D$ be univalent. Let $E\subset S^1$ be a measurable set such that $f(E)\subset S^1$, and $|E|>0$.
Then
\begin{equation}\label{eq-c-bdist}
|E| \sqrt{\frac{(1-|f(0)|)^3}{(1+|f(0)|)|f'(0)|}}\le 2\pi.
\end{equation}
\end{coro}

\

\

For the sake of completeness, we conclude this appendix with a direct short "dynamical" proof of (\ref{eq-b2}) (in fact, a slightly stronger one, see (\ref{eq-dyn})) assuming that there exists
an open arc $I\subset S^1$, such that $\zeta\in I$ and $f(I)\subset S^1$. In this case $f$ extends to a holomorphic function through the arc $I$,
in particular, $1\le |f'(\zeta)|<\infty$.
Replacing $f(z)$ by $f(\zeta z)/f(\zeta)$ one can assume w.l.o.g. that $\zeta=f(\zeta)=1$. Then $f'(1)$ is real positive.

Recall \cite{Ahl} that, given a family $\Gamma$ of curves in the plane,
the extremal length of $\Gamma$,
$$\lambda(\Gamma)=\sup_{\nu}\frac{L(\nu)}{A(\nu)},$$
over all measurable
$\nu:\R^2\to \{x\ge 0\}$ such that
$A(\nu)=\int\int\nu^2 dxdy \neq 0,\infty$, and,
for each such $\nu$,
$L_\gamma(\nu)=\int_\gamma \nu |dz|$,
and
$L(\nu)=\inf_{\gamma\in \Gamma}L_\gamma(\nu)$.
We will use two easy properties of the extremal length: it is a conformal invariant, and $\lambda(\Gamma)\ge \frac{L(\nu)}{A(\nu)}$, for every admissible function $\nu$.
\begin{lemma}\label{l-extr}
\begin{equation}\label{eq-dyn}
\log\frac{1}{|f'(0)|}\le
\frac{|L|^2}{\Re(L)}\le
2\log f'(1),
\end{equation}
for some branch $L$ of $\log\frac{1}{f'(0)}$.
\end{lemma}
\begin{proof}(cf. \cite{l-mult})
Let $\alpha=f'(0)$, $\rho=f'(1)$, so that $0<|\alpha|<1$ and $1\le\rho<\infty$.
Case $\rho>1$. By Koenigs' linearization theorem \cite{CG} applied to $f$ near its repelling fixed point $1\in S^1$, $f:\D\to\D$ in a semi-neighborhood $\Omega_1\subset\D$ of the point $1$ is conformally conjugate to a linear map $M_1: w\mapsto \rho w$ in a semi-disk $\D_\epsilon^+:=\{|w|<\epsilon, \Im(w)>0\}$. Hence,
$\Omega_1$ contains a family $\Gamma$ of (images of) curves which, after projection to $\D_\epsilon^+$, is a family $\Gamma_1$
of all curves $\gamma_w$ such that
$\gamma_w\subset \D_\epsilon^+\setminus \rho^{-1}\overline{\D_\epsilon^+}$ and $\gamma_w$ joins some $w$ with $M_1^{-1}(w)=\rho^{-1} w$, over all $|w|=\epsilon$, $\Im(w)>0$.


It is well-known \cite{Ahl} that $\lambda(\Gamma_1)=\log\rho/\pi$.

On the other hand, $\D$ is the basin of attraction of the attracting fixed point $0$ of $f:\D\to\D$. By the same Koenigs' theorem,
$f:\D\to\D$ is conformally conjugate to a linear map $M_0: w\mapsto \alpha w$ where $M_0: \Omega\to\Omega$ for some simply-connected domain $\Omega\ni 0$.
Then $\Gamma$ is projected to a family of curves $\Gamma_0$ which fill some domain $U_0\subset\Omega\setminus\{0\}$ as follows:
each $\gamma\in\Gamma_0$ joins some $w\in\partial U_0$ to $M_0(w)=\alpha w$, and $M_0^{\pm 1}(w)\notin U_0$ whenever $w\in U_0$.
Let $\nu_0(w)=1/|w|$ for $w\in U_0$ and $\nu_0=0$ otherwise. It is easy to check that $L_{\nu_0}(\gamma)\ge |\log\alpha^{-1}|$
for all $\gamma\in\Gamma_0$, and,
since $\nu_0(w)|dw|$ is invariant by $M_0$, $A(\nu_0)\le 2\pi\log|\alpha^{-1}|$.
Thus
$$\frac{\log\rho}{\pi}=\lambda(\Gamma_1)=\lambda(\Gamma)=\lambda(\Gamma_0)\ge \frac{L(\nu_0)^2}{A(\nu_0)}\ge\frac{|\log\alpha^{-1}|^2}
{2\pi \log|\alpha^{-1}|}.$$
This proves (\ref{eq-dyn}) in the case $\rho>1$.
The remaining case $\rho=1$ is impossible.
Indeed, let us consider the quotient space $\tilde f_1:=\Omega_1/\{z\sim f^{-1}(z)\}$.
If $\rho>1$, we have seen that $\tilde f_1$ is conformally equivalent to a finite geometric cylinder $\D_\epsilon^+/\{w\sim \rho^{-1}w\}$ and $\Gamma$ is just the set of all closed curves wrapping once around $\tilde f_1$. On the other hand, if $\rho=1$,
then, by the Leau-Fatou Parabolic linearization theorem (see e.g. \cite{CG}), $\tilde f_1$ is conformally equivalent to a geometric cylinder
$\{Re(w)>0\}/\{w\sim w+1\}$
of infinite hight so that $\lambda(\Gamma)=0$. Thus $0=\lambda(\Gamma)=\lambda(\Gamma_0)\ge\frac{|\log\alpha^{-1}|^2}{2\pi \log|\alpha^{-1}|}>0$,
a contradiction.
\end{proof}


\end{document}